\newtheorem{theorem}{Theorem}[section]
\newtheorem{lemma}[theorem]{Lemma}
\newtheorem{proposition}[theorem]{Proposition}
\newtheorem{corollary}[theorem]{Corollary}
\theoremstyle{definition}
\newtheorem{example}[theorem]{Example}
\newtheorem{conjecture}[theorem]{Conjecture}
\theoremstyle{remark}
\numberwithin{equation}{section}
\newcommand{\vast}{\bBigg@{3}}
\newcommand{\Vast}{\bBigg@{4}}
\begin{document}

\title{\begin{Huge}\textbf{Self-Contained Graphs}\end{Huge}}
\author{\textbf{Mohammad Hadi Shekarriz}\footnote{\small{Corresponding author\\ \indent \textit{E-mail addresses.} mh.shekarriz@stu.um.ac.ir and mirzavaziri@um.ac.ir\\
\indent \textit{2010 Mathematics Subject Classification.} 05C63, 05C60}} \textbf{and Madjid Mirzavaziri}}
\address{Department of Pure Mathematics, Ferdowsi University of Mashhad,\\
P. O. Box 1159, Mashhad 91775, Iran.}

\begin{abstract}
A self-contained graph is an infinite graph which is isomorphic to one of its proper induced subgraphs. In this paper, these graphs are studied by presenting some examples and defining some of their sub-structures such as removable subgraphs and the foundation. Then, we show that the general version of graph alternative conjecture, which says every graph has infinitely many strong twins or none, can be deduced from its connected version, which says every connected graph has infinitely many connected strong twins or none. Moreover, we try to find out under what conditions on two arbitrary removable subgraphs, their union is also a removable subgraph.\\

\noindent \small{\textbf{Keywords:} self-contained graph, removable subgraph, the foundation, torsion subgraph, graph alternative conjecture.}

\noindent \textbf{Mathematics Subject Classifications} 05C63; 05C60
\end{abstract}

\maketitle

\section{Introduction}
Although it is impossible for finite graphs, there are infinite graphs which are isomorphic to one of their proper subgraphs. Infinite empty graphs (which are graphs with no edges), rays, infinite stars and complete infinite graphs are trivial examples, but there are more interesting ones such as the Rado (random, unique ultra-homogeneous
countable) graph. In this paper, our intention is to study properties and structures of \emph{self-contained graphs} which are isomorphic to one of their proper \textit{induced} subgraphs.

The problem of considering an isomorphism of a graph to one of its proper subgraphs has its origin at least to 1970s, for example, they have appeared in Halin's work \cite{Halin1}. But self-contained graphs have only recently fascinated mathematicians by the so-called \emph{``Graph alternative conjecture''} which has started in \cite{Bonato1} where Bonato and Tardif studied \emph{twins} of infinite graphs under the phrase \emph{``mutually embeddable graphs''}; two non-isomorphic graphs $G$ and $H$ are called \textit{``strong twins''} if $G$ is isomorphic to a proper induced subgraph of $H$ and $H$ is also isomorphic to a proper induced subgraph of $G$. They asked a question that if $G$ and $H$ are twins, then do $G$ and $H$ belong to an infinite family of twins? A few years later, they extended their study of twins in \cite{Bonato2} where they noted that if an infinite graph has a strong twin, then it is isomorphic to one of its proper induced subgraphs, i.e., in our phrase, every graph that has a strong twin is also self-contained. They also conjectured that every infinite tree has either infinitely many tree-twins or none. They called it \emph{``the tree alternative conjecture''} and proved it for all rayless trees \cite{Bonato2}.

In 2009, Tyomkyn proved that the tree alternative conjecture is true for all rooted trees \cite{Tyomkyn}. Moreover, using Schmidt's method (which has been introduced in English by Halin in \cite{Halin2}), Bonato et al. proved that (i) a rayless graph has either infinitely many twins or none, and (ii) a connected rayless graph has either infinitely many connected twins or none \cite{rayless}. Therefore, we have the following two versions of graph alternative conjecture for strong twins:

\begin{conjecture}
\label{general}
A graph $G$ has infinitely many strong twins or none.
\end{conjecture}

\begin{conjecture}
\label{connected}
A connected graph $G$ has infinitely many connected strong twins or none.
\end{conjecture}

Unfortunately, there are very few further developments to the graph alternative conjecture. We can only name the recent unpublished work of Laflamme, Pouzet and Sauer \cite{pouzet}, in which it is proved that tree alternative conjecture is also true for \emph{scattered trees}, that are trees containing no subdivision of the complete binary tree as a subtree. 

Here in this paper, we start studying self-contained graphs by presenting some examples and recognizing some of their internal structures such as removable subgraphs, the foundation, torsion of a removable subgraph, etc. Afterwards, in Section 3, we prove the following theorem.

\begin{theorem}
\label{main}
Conjecture \ref{connected} implies Conjecture \ref{general}.
\end{theorem}

In the last section, we try to find out under what conditions on two removable subgraph of a self-contained graph, their union is also a removable subgraph. Solving this could be a big step toward solving the general case of the graph alternative conjecture, because it may enable us to inductively construct infinitely many twins for a graph that we know it already has a twin.

To read this paper, besides a general knowledge of graph theory and infinite sets, only few definitions of infinite graph theory is needed, all of which can be found in Section 8 of \cite{Diestel}.

To simplify, we use the notation $\emptyset$ for the null graph, the unique graph that has no vertices. Furthermore, we use the notations $\subseteq$ and $\subset$ for the subgraph relations, and $G \setminus H$ for the induced subgraph $G[V(G) \setminus V(H)]$.

\section{Self-contained graphs: notations, structures and examples}
A self-contained graph can be a digraph or a multigraph, but we limit our attention to simple graphs. These graphs have interesting properties we are intended to study here. One of many interesting property of self-contained graphs is that they are essential to all connected infinite graphs. By theorem 8.2.1 of \cite{Diestel}, every connected infinite graph contains a ray or a vertex of infinite degree (an infinite star), so, every connected infinite graph contains a self-contained subgraph (not necessarily an induced one).

Let $G$ be a self-contained graph. A non-empty proper subgraph $H$ of $G$ is a \emph{removable subgraph} of $G$ if $G \setminus H \cong G$. Then we write $H \in \mathrm{Rem}(G)$ and by $\mathrm{Iso}_{G} (H)$ we mean the set of all isomorphisms $f:G\longrightarrow G \setminus H$. It can be easily seen (at the sight of Proposition \ref{union-rem}) that $\vert \mathrm{Rem}(G) \vert \geq \aleph_{0}$, where $\aleph_0$ is the least infinite cardinal, equipotent to the cardinality of natural numbers. The \emph{linkage of $H$}, $\mathrm{link}(H)$, is the edges joining $H$ to $G \setminus H$, i. e.,  $\mathrm{link}(H) = E(H,G\setminus H)=E(G) \setminus \Big(E(H) \cup E\big( G \setminus H\big) \Big)$.

\begin{proposition}
\label{7}
Let $G$ be a self-contained graph and $G^c$ be its complement. Then $G^c$ is a self-contained graph. Moreover, if $H \in \mathrm{Rem}(G)$ then $H^{c} \in \mathrm{Rem}(G^{c})$.
\end{proposition}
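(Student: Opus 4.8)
The plan is to reduce everything to two elementary facts about complementation: that taking complements commutes with passing to induced subgraphs, and that a graph isomorphism between two graphs is automatically an isomorphism between their complements. Concretely, for any vertex subset $S \subseteq V(G)$ one has $G^c[S] = (G[S])^c$, since a pair of vertices in $S$ is adjacent in $G^c[S]$ exactly when it is non-adjacent in $G[S]$; and if $f$ is an isomorphism from a graph $A$ to a graph $B$, then the same bijection $f$ is an isomorphism from $A^c$ to $B^c$, because $f$ preserves both adjacency and non-adjacency.

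First I would establish self-containedness of $G^c$. Since $G$ is self-contained, there is a proper subset $S \subsetneq V(G)$ with $G[S] \cong G$. Applying the two facts above, $G^c[S] = (G[S])^c \cong G^c$, and because $S$ is a proper subset of $V(G) = V(G^c)$, the subgraph $G^c[S]$ is a proper induced subgraph of $G^c$ isomorphic to $G^c$. Hence $G^c$ is self-contained.

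For the second assertion, I would read $H^c$ on the vertex set $V(H^c) = V(H)$, so that $H^c$ is a non-empty proper subgraph of $G^c$ precisely because $H$ is a non-empty proper subgraph of $G$. The heart of the argument is the chain of identities
\[
G^c \setminus H^c = G^c\big[V(G) \setminus V(H)\big] = \big(G[V(G) \setminus V(H)]\big)^c = (G \setminus H)^c,
\]
where the middle equality is the induced-subgraph identity applied to $S = V(G) \setminus V(H)$. Since $H \in \mathrm{Rem}(G)$ gives $G \setminus H \cong G$, the isomorphism-complement fact yields $(G \setminus H)^c \cong G^c$, so $G^c \setminus H^c \cong G^c$ and therefore $H^c \in \mathrm{Rem}(G^c)$.

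The argument involves no serious obstacle; the only point requiring care is the interpretation of $H^c$ and the verification of the induced-subgraph identity $G^c[S] = (G[S])^c$, which must be checked at the level of adjacency so that complementation and restriction genuinely commute. Once that identity is in hand, both parts of the proposition follow formally, and in fact any isomorphism witnessing $G \setminus H \cong G$ doubles as one witnessing $G^c \setminus H^c \cong G^c$, giving a natural bijection between $\mathrm{Iso}_{G}(H)$ and $\mathrm{Iso}_{G^c}(H^c)$.
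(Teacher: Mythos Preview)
Your proof is correct and follows essentially the same approach as the paper: both rely on the identity $(G \setminus H)^{c} = G^{c} \setminus H^{c}$ together with the fact that an isomorphism $G \cong G \setminus H$ induces $G^{c} \cong (G \setminus H)^{c}$. Your write-up simply makes explicit the two elementary lemmas (that complementation commutes with taking induced subgraphs, and that a graph isomorphism is also an isomorphism of complements) which the paper's one-line proof uses implicitly.
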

\begin{proof} Let $H$ be proper subgraph of $G$ such that $V(H) \neq \emptyset$ and $G \cong G \setminus H$. Then $G^{c} \cong (G \setminus H)^{c} = G^{c} \setminus H^{c}$ which shows that $G^c$ is a self-contained graph and $H^c$ is a removable subgraph of it.
\end{proof}
\begin{example}
Let $R$ be the random graph (which is also known as the Rado graph). Then $R$ is a self-contained graph that is also self-complimentary, i.e., $R \cong R^{c}$. For the random graph and its interesting properties see \cite{Rado} or Section 8.3 of \cite{Diestel}.
\end{example}
The following two propositions are easy to prove. Meanwhile, Proposition \ref{8} has been stated for locally finite graphs in \cite{Halin1} (Proposition 11, page 263).
\begin{proposition}
\label{union-rem}
Let $G$ be a self-contained graph, $P \in \mathrm{Rem}(G)$ and $Q$ be an induced subgraph of $G \setminus P$. Then $Q \in \mathrm{Rem}(G \setminus P)$ if and only if $P \cup Q \in \mathrm{Rem}(G)$.
\end{proposition}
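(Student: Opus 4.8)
The plan is to prove both directions of the biconditional by chaining together isomorphisms of the form $G \cong G \setminus (\text{something})$, keeping careful track of vertex sets. Throughout I would use the fact that if $P$ is an induced subgraph of $G$ and $Q$ is an induced subgraph of $G \setminus P$, then $P \cup Q$ (on vertex set $V(P) \cup V(Q)$) is an induced subgraph of $G$, and that set-theoretically $V\big(G \setminus (P \cup Q)\big) = V(G) \setminus \big(V(P) \cup V(Q)\big)$. The central observation is the associativity-type identity
\begin{equation}
\label{assoc}
(G \setminus P) \setminus Q = G \setminus (P \cup Q),
\end{equation}
which holds because both sides are the subgraph of $G$ induced on $V(G) \setminus \big(V(P) \cup V(Q)\big)$, and induced subgraphs are determined entirely by their vertex sets. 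This identity is what lets me pass freely between ``removing $P$ and then $Q$'' and ``removing $P \cup Q$ in one step.''

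For the forward direction, I would assume $Q \in \mathrm{Rem}(G \setminus P)$, which by definition means $(G \setminus P) \setminus Q \cong G \setminus P$. Since $P \in \mathrm{Rem}(G)$ gives $G \setminus P \cong G$, I can chain these to obtain $(G \setminus P) \setminus Q \cong G$. Rewriting the left-hand side via \eqref{assoc} yields $G \setminus (P \cup Q) \cong G$. I must also check that $P \cup Q$ is a genuine nonempty proper subgraph of $G$: nonemptiness is immediate since $V(P) \neq \emptyset$, and properness follows because $G \setminus (P \cup Q) \cong G$ forces the removal to be nontrivial (or more directly, $P \cup Q$ being removable is exactly the condition $G \setminus (P \cup Q) \cong G$ together with $P \cup Q$ being a nonempty proper induced subgraph). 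Hence $P \cup Q \in \mathrm{Rem}(G)$.

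For the reverse direction, I would assume $P \cup Q \in \mathrm{Rem}(G)$, so $G \setminus (P \cup Q) \cong G$, and again use $G \setminus P \cong G$ from $P \in \mathrm{Rem}(G)$. Applying \eqref{assoc} in reverse, $(G \setminus P) \setminus Q = G \setminus (P \cup Q) \cong G \cong G \setminus P$, so $(G \setminus P) \setminus Q \cong G \setminus P$, which is precisely the statement that $Q \in \mathrm{Rem}(G \setminus P)$. The hypothesis that $Q$ is an induced subgraph of $G \setminus P$ guarantees the expression is well-formed, and $Q$ nonempty and proper in $G \setminus P$ can be verified analogously to the forward direction.

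I expect the only real subtlety, rather than an obstacle, to be bookkeeping about vertex sets: making sure that $V(P)$ and $V(Q)$ are disjoint (which holds because $Q \subseteq G \setminus P$ means $V(Q) \cap V(P) = \emptyset$) so that $P \cup Q$ is unambiguous as an induced subgraph, and confirming that \eqref{assoc} is a genuine equality of induced subgraphs and not merely an isomorphism. Once identity \eqref{assoc} is established cleanly, both directions are symmetric chains of isomorphisms, so the proof is essentially a one-line computation in each direction plus the verification that the combined subgraph is nonempty and proper. The ``easy to prove'' label in the statement is therefore justified, and the main value of writing it out is to pin down the vertex-set conventions that make $G \setminus (P \cup Q)$ meaningful.
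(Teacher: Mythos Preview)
Your argument is correct and is precisely the natural proof via the identity $(G\setminus P)\setminus Q = G\setminus(P\cup Q)$ together with chaining the two given isomorphisms; the paper does not supply its own proof for this proposition (it merely declares it ``easy to prove''), so there is nothing further to compare. One small remark: in the reverse direction your claim that ``$Q$ nonempty \ldots\ can be verified analogously'' is not quite right, since $P\cup Q\in\mathrm{Rem}(G)$ alone does not force $Q\neq\emptyset$ (taking $Q=\emptyset$ gives $P\cup Q=P\in\mathrm{Rem}(G)$); this harmless edge case is tacitly excluded by reading ``induced subgraph'' as nonempty, which is evidently the paper's intended convention.
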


\begin{proposition}
\label{8} Let $G$ be a self-contained graph and $H \in \mathrm{Rem}(G)$. Then $G$ contains infinitely many vertex disjoint copies of $H$. 
\end{proposition}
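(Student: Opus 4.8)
The plan is to exploit a single containment isomorphism and iterate it. Since $H \in \mathrm{Rem}(G)$, the set $\mathrm{Iso}_G(H)$ is non-empty, so I would fix an isomorphism $f \colon G \longrightarrow G \setminus H$. Because $G \setminus H = G[V(G) \setminus V(H)]$ is an induced subgraph of $G$, I would reinterpret $f$ as an injective, adjacency-preserving self-embedding $f \colon G \longrightarrow G$ whose image is exactly $V(G) \setminus V(H)$. The crucial feature to record at this stage is that $V(H)$ is precisely the set of vertices of $G$ missed by $f$, i.e. $V(H) \cap f(V(G)) = \emptyset$.

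Next I would produce the copies by iteration. Define, for each $n \geq 0$, the subgraph $H_n := f^n(H)$ (vertex set $f^n(V(H))$, edge set the $f^n$-image of $E(H)$), with $H_0 = H$. Since a composition of adjacency-preserving injections is again an adjacency-preserving injection, each $f^n$ restricts on $H$ to an isomorphism onto $H_n$, so every $H_n$ is a copy of $H$ inside $G$. This immediately yields infinitely many copies; the only real content is that they can be taken pairwise \emph{vertex-disjoint}.

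For disjointness I would argue in two steps. First, for every $n \geq 1$ we have $f^n(V(H)) \subseteq f(V(G))$, which is disjoint from $V(H)$ by the observation above; hence $V(H_0)$ is disjoint from every $V(H_n)$ with $n \geq 1$. Second, for arbitrary $0 \leq i < j$ I would write $f^j(V(H)) = f^i\big(f^{\,j-i}(V(H))\big)$ and use the injectivity of $f^i$ to get
\[
  f^i(V(H)) \cap f^j(V(H)) = f^i\!\Big( V(H) \cap f^{\,j-i}(V(H)) \Big) = \emptyset,
\]
the last equality following from the first step since $j-i \geq 1$. Thus $\{H_n\}_{n \geq 0}$ is an infinite family of pairwise vertex-disjoint copies of $H$ in $G$, as required.

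I do not anticipate a genuine obstacle here: the statement reduces entirely to bookkeeping with the self-embedding $f$, and the only point needing slight care is the disjointness across all index pairs, which the injectivity of the iterates $f^i$ resolves cleanly. One could alternatively phrase the same construction through repeated application of Proposition \ref{union-rem}, but the direct iteration of $f$ seems the most transparent route.
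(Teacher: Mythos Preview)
Your argument is correct: iterating a fixed $f \in \mathrm{Iso}_G(H)$ and using injectivity of the powers $f^i$ to deduce pairwise disjointness of the $H_n = f^n(H)$ is exactly the natural route, and your two-step disjointness check is clean. The paper in fact omits the proof entirely, declaring the proposition ``easy to prove'' and citing Halin for the locally finite case, so your write-up supplies precisely the routine verification the authors chose to leave out.
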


The following theorem characterizes removable subgraphs of disconnected self-contained graphs. It worth mentioning that, because we do not need subgraphs to be induced, it is true for all graphs with self-embedding, not only for self-contained graphs.

\begin{theorem}
\label{dis-conn}
Let $G$ be a disconnected self-contained graph. Then at least one of the following statements is true:
\begin{itemize}
\item[i.] $G$ contains a connected component which is also a self-contained graph.
\item[ii.] There exists a connected component of $G$ which is a removable subgraph of it.
\item[iii.] For each $H \in \mathrm{Rem}(G)$ and $f \in \mathrm{Iso}_{G}(H)$ there exist infinitely many connected components of $G$, namely $\{ G_{z} \}_{z \in \mathbb{Z}}$, such that for each $z \in \mathbb{Z}$ we have $H$ and $f(H)$ have non-empty intersections with $G_z$.
\end{itemize}
\end{theorem}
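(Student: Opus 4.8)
The plan is to argue by elimination. Statements (i) and (ii) are existential (\emph{some} component is self-contained, resp.\ removable), whereas (iii) is universal in $H$ and $f$, so I would assume that (i) and (ii) both fail and deduce (iii). The two negations translate into clean facts about the multiset of isomorphism types of the connected components of $G$. A single component $C$ is a removable subgraph exactly when $G\setminus C\cong G$, i.e.\ exactly when its isomorphism type $[C]$ occurs infinitely often among the components; hence $\neg(\mathrm{ii})$ says that every component type has finite multiplicity. And $\neg(\mathrm{i})$ says that no component is isomorphic to a proper induced subgraph of itself, so that ``is isomorphic to a proper induced subgraph of'' is a strict partial order $\prec$ on the set of types actually occurring in $G$.

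Next I would record how $f$ moves the components. Since $f\colon G\to G\setminus H$ is an isomorphism, it carries each component $C$ isomorphically onto a component of $G\setminus H$, which lies in a unique component $\phi(C)$ of $G$; this defines a map $\phi$ on the set $\mathcal{C}$ of components. The key local fact is a dichotomy along $C\mapsto\phi(C)$: if $\phi(C)$ is disjoint from $H$ then $f(C)$ is a whole component and $\phi(C)\cong C$, while if $\phi(C)$ meets $H$ then $f(C)$ is a proper piece of $\phi(C)$, so $C$ embeds as a proper induced subgraph of $\phi(C)$ and $[C]\prec[\phi(C)]$. Thus types never decrease along $\phi$ and increase strictly exactly when the image lands in a component meeting $H$. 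Writing $S$ for the set of components meeting $H$, the components meeting $f(H)$ are precisely $\phi(S)$ (because $f(C)$ meets $f(H)$ iff $C$ meets $H$), so the family demanded by (iii) must be sought inside $S$ and $\phi(S)$.

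The cornerstone I would establish first is that $S$ is infinite. Comparing the two descriptions of the components of $G\setminus H$ — as the $f$-images of the components of $G$, and as the fragments obtained by deleting $V(H)$ — gives, for every type $t$, the cardinal identity: (number of components of type $t$ meeting $H$) $=$ (number of type-$t$ fragments split off from components that are genuinely cut). Here finite multiplicity is exactly what licenses cancelling the components disjoint from $H$, which contribute to both sides. If $S$ were finite, its finitely many types would have a $\prec$-maximal element $t^{\ast}$, for which the left side is positive while the right side is $0$: a type-$t^{\ast}$ fragment would have to come from a cut (hence $H$-meeting) component of type strictly above $t^{\ast}$, contradicting maximality. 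Hence $S$ is infinite; equivalently, starting from any type in $S$ and reapplying the identity produces an infinite strictly $\prec$-ascending chain of cut components, all meeting $H$. Running the same argument for $f(H)$ inside the isomorphic copy $G\setminus H$ shows $f(H)$ too is involved with infinitely many fragments, which already furnishes the countable family $\{G_z\}_{z\in\mathbb{Z}}$ witnessing that the action of $f$ is spread over infinitely many components rather than concentrated.

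The step I expect to be the crux is upgrading ``infinitely many components are involved'' to the sharp form of (iii), where a single $\mathbb{Z}$-indexed family is required to meet $H$ and $f(H)$ in the linked manner dictated by $f$. The difficulty is genuine: $\prec$ need not be well founded on infinite components, so no minimal-counterexample shortcut is available, and, more seriously, the dynamics can \emph{collapse} — a single component may be cut by $H$ into infinitely many pieces and thereby absorb the whole $f(H)$-part of the action into finitely many components (indeed one can build disconnected self-contained graphs, e.g.\ two copies of each finite path, in which $S$ and $\phi(S)$ are disjoint, showing the naive ``both-meeting'' reading cannot be forced from the local data alone). To control this I would combine finite multiplicity with the conservation of types and with the foundation $F=\bigcap_{n}f^{n}(G)$, on which $f$ restricts to an automorphism: by finite multiplicity a forward $\phi$-orbit either meets $S$ infinitely often or eventually enters a type-constant $f$-cycle inside $F$, and I would try to track the infinitely many components of $S$ under the iterates of $\phi$ to extract the required linked family. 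Excluding the collapsing configuration precisely, and thereby pinning down exactly which form of (iii) the hypotheses actually yield, is the delicate heart of the argument.
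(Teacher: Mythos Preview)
Your instinct at the ``crux'' is sharper than you realise: the example you mention in passing is not merely an obstacle to one proof strategy, it is a full counterexample to the theorem as stated. Take $G=\bigsqcup_{n\ge 1}(A_n\sqcup B_n)$ with $A_n\cong B_n\cong P_n$, and let $H=\{a_n:n\ge 1\}$ where $a_n$ is an endpoint of $A_n$. Every component is a finite path, so (i) fails; every isomorphism type occurs with multiplicity exactly~$2$, so by your own equivalence (ii) fails. Now $G\setminus H\cong G$ via the isomorphism $f$ that sends $A_n$ onto $B_n$ and $B_n$ onto $A_{n+1}\setminus\{a_{n+1}\}$. For this $f$ one has $H\subseteq\bigsqcup_n A_n$ while $f(H)\subseteq\bigsqcup_n B_n$, so \emph{no} component of $G$ meets both $H$ and $f(H)$; in your notation $S=\{A_n\}$ and $\phi(S)=\{B_n\}$ are disjoint. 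Hence (iii) fails for this pair $(H,f)$, and the trichotomy is false. You should therefore stop trying to ``control the collapsing configuration'' and instead conclude that the statement needs to be weakened --- for instance to the assertion that $H$ alone meets infinitely many components, which your type-counting argument on $S$ proves cleanly.

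The paper's proof breaks at exactly the point you isolated. Its Case~3 splits into two sub-cases on the behaviour of $f^{n}(H\cap G_z)$, but these are not logical complements: Sub-case~3.1 requires some $z$ whose iterates escape the family $\{G_t\}$ for \emph{every} $n$, while Sub-case~3.2 requires that \emph{every} $z$ and \emph{every} $n$ land back in some $G_t$. The counterexample above falls in the gap, since $f(H\cap A_n)\subseteq B_n$ escapes but $f^{2}(H\cap A_n)\subseteq A_{n+1}$ returns. Even inside Sub-case~3.2, the decisive claim --- that each $G_z$ meets $f(H)$ --- is simply asserted without argument, and the example shows it is false in general. Your framework (the map $\phi$ on components, the strict order $\prec$ on types, and the finite-multiplicity bookkeeping) is the right way to analyse the situation; it is the theorem, not your method, that is at fault.
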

\begin{proof}
Let the first statement be false about $G$, $H \in \mathrm{Rem}(G)$ and $f: G \longrightarrow G \setminus H$ be an isomorphism. Then there are three possibilities:
\begin{itemize}
\item[Case 1.] $H$ contains a connected component $P$ of $G$ as a proper induced subgraph. If $G$ has infinitely many connected components isomorphic to $P$, then suppose $R= \{ P_{0} = P, P_{1}, P_{2}, \ldots \}$ be a subfamily of copies of $P$ in $G$ and define $g: G \longrightarrow G \setminus P$ so that it moves $P_{i}$ to $P_{i+1}$ for each $i=0,1,2, \ldots$ and fixes all other vertices outside $R$. Since $g$ is an isomorphism, $P$ is a removable subgraph of $G$. Therefore, it suffice to consider the case that $G$ has only finitely many connected component isomorphic to $P$. Suppose that $m\geq n>0$ are the number of connected components of isomorphic to $P$ in $G$ and $H$ respectively. Then, there must be a connected component of $G$, namely $\hat{G}$, such that $\hat{G} \cap H \neq \emptyset$ and $\hat{G} \setminus H$ contains at least one connected component, namely $Q$, which is isomorphic to $P$. Therefore, $(H \setminus P) \cup Q)$ is another removable subgraph of $G$ which has $n-1$ connected components isomorphic to $P$. By iterating this process at most to the order of power set of all connected components of $G$ which are also subsets of $H$, we find a removable subgraph $\hat{H}$ that does not contain any connected component of $G$. Then, we can consider $\hat{H}$ instead of $H$, and other cases might happen for it.

\item[Case 2.] $H$ does not contain a connected component of $G$, but it is made of parts of finitely many connected components of $G$, namely $G_{1}, G_{2}, \ldots , G_{k}$. If $f \in \mathrm{Iso}_{G}(H)$, then $f(G_{i})$ must be a connected component of $G \setminus H$, for $i=1,2, \ldots ,k$. So, one of the two possibilities may occur:
\begin{itemize}
\item[Case 2.1.] There is $1 \leq i \leq k$ such that $f^{n}(H \cap G_{i}) \nsubseteq G_{j}$ for all $j=1,2,\ldots ,k$ and $n \in \mathbb{N}$. Then $G_{i}, f^{1}(G_{i}), f^{2}(G_{i}), \ldots$ are infinitely many isomorphic copies of the connected component $G_i$, and hence the second statement is true.
\item[Case 2.2.] For all $1 \leq i \leq k$ and $n \in \mathbb{N}$ we have $f^{n}(H \cap G_{i}) \subset G_{j}$ for some $j=1,2,\ldots ,k$. Then $G_i$ must contain one of $f^{1}(G_{i}), f^{2}(G_{i}), \ldots$, which means that $G_{i}$ is a self-contained component of $G$, a contradiction to our assumption.
\end{itemize}
\item[Case 3.] $H$ is made of parts of infinitely many connected components of $G$, and so there must be a countable subfamily of them that could be named $\{ G_{z} \}_{z \in \mathbb{Z}}$. If $f \in \mathrm{Iso}_{G}(H)$, then $f(G_{z})$ must be a connected component of $G \setminus H$, for each $z\in \mathbb{Z}$. So, one of the two possibilities may occur:
\begin{itemize}
\item[Case 3.1.] There is $z\in \mathbb{Z}$ such that $f^{n}(H \cap G_{z}) \nsubseteq G_{t}$ for all $t\in \mathbb{Z}$ and $n \in \mathbb{N}$. Then $G_{z}, f^{1}(G_{z}), f^{2}(G_{z}), \ldots$ are infinitely many isomorphic copies of the connected components $G_z$ which are also connected components of $G$, and hence the second statement is true again.
\item[Case 3.2.] For all $z \in \mathbb{Z}$ and $n \in \mathbb{N}$ we have $f^{n}(H \cap G_{z}) \subset G_{t}$ for some $t\in \mathbb{Z}$. The case $t=z$ is absurd since it means that $G_z$ is self-contained, so we suppose $t\neq z$. Then, for each $z \in \mathbb{Z}$ we have $H$ and $f(H)$ have non-empty intersections with $G_z$, which means that the third statement is true.
\end{itemize}
\end{itemize}
Since these cases cover all possibilities and in each case the second or the third statement is true, we are done with the proof.
\end{proof}

Self-contained graphs may have substructures that are not removable. Let us see an example first:
\begin{example}
\label{9}
Let $G$ be the graph with vertex set $V = \{ 0, 1, 2, \ldots \}$ and its edges defined as follows: $0$ is adjacent to all other vertices while other vertices are only adjacent to $0$ and their consecutive vertices. Consequently, vertex degrees of $0$, $1$ and $i$ for $i \geq 2$ are $\infty$, $2$ and $3$, respectively. It is obvious that $G$ is a self contained graph since there is an isomorphism $G \cong G \setminus \{ 1 \}$. The vertex $0$ is not a vertex of a removable subgraph of $G$.
\end{example}

Let $G$ be a self-contained graph. A subgraph $H$ of $G$ is called an \emph{asset} to $G$ if the intersection of $V(H)$ and vertex set of every removable subgraph of $G$ is empty. The union of all assets of a self-contained graph $G$, is an asset to $G$ which is called \emph{the foundation} of $G$ and shown by $\mathrm{Fnd}(G)$. It is easy to verify that $$\mathrm{Fnd}(G) = \bigcap_{H \in \mathrm{Rem}(G)} G \setminus H.$$

\begin{example}
\label{11} We have seen in Example \ref{9} that $\mathrm{Fnd}(G)$ can be a finite induced subgraph of $G$. Here are two examples of self-contained graphs, one of which has an infinite foundation and the other has a null one.
\begin{itemize}
\item[(a)] Let $G$ be the graph whose vertex set consists of three disjoint copies of natural numbers: $A_{n} = \{ a_{n1}, a_{n2}, \ldots \}$, 
for $n=1, 2, 3$. The edges of $G$ is of the following three kinds:
\begin{itemize}
\item[i.] $\{ a_{1j} a_{1(j+1)} \} \in E(G)$ for all $j \in \mathbb{N}$,
\item[ii.] $\{ a_{11} a_{2j} \} \in E(G)$ for all $j \in \mathbb{N}$,
\item[iii.] $\{ a_{14} a_{3j} \} \in E(G)$ for all $j \in \mathbb{N}$.
\end{itemize} 
Then $G$ is a self-contained graph and $V(\mathrm{Fnd}(G))  = A_{1}$. Hence, $\mathrm{Fnd}(G)$ is a ray.
\item[(b)] The $\mathbb{N} \times \mathbb{N}$ grid, the graph on $\mathbb{N}^2$, in which two vertices $(m,n)$ and $(m^{'},n^{'})$ are adjacent if and only if $\vert m - m^{'} \vert + \vert n - n^{'}\vert = 1$, is a self-contained graph whose foundation is the null graph.
\end{itemize}
\end{example}
As we have seen in Example \ref{11} (b), the foundation of a self-contained graph may have no vertices. A more simple example is a ray, whose foundation is also the null graph. Meanwhile, when $G$ is a self-contained graph with non-empty foundation, every isomorphism of $G$ onto one of its proper induced subgraphs, $H$, maps the foundation of $G$ onto the foundation of $H$. Although $\mathrm{Fnd}(H)$ is again a proper induced subgraph of $G$, it can be different from $\mathrm{Fnd}(G)$ (see Example \ref{15}). By the way, we have the following proposition.
\begin{proposition} 
\label{12}
Let $G$ be a self-contained graph and $H \in \mathrm{Rem}(G)$. Then we have $\mathrm{Fnd}(G) \subseteq \mathrm{Fnd}(G\setminus H)$. 
\end{proposition}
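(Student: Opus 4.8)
The plan is to reduce the asserted subgraph inclusion to a vertex-set inclusion and then exploit the characterization $\mathrm{Fnd}(G) = \bigcap_{K \in \mathrm{Rem}(G)} G \setminus K$, which says that a vertex lies in $\mathrm{Fnd}(G)$ precisely when it belongs to $V(K)$ for no $K \in \mathrm{Rem}(G)$. Since $\mathrm{Fnd}(G)$ and $\mathrm{Fnd}(G \setminus H)$ are both induced subgraphs of $G$ (using that $G \setminus H$ is itself induced in $G$), it suffices to prove $V(\mathrm{Fnd}(G)) \subseteq V(\mathrm{Fnd}(G \setminus H))$; the induced-subgraph inclusion then follows automatically.

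First I would check that every foundation vertex of $G$ actually survives into $G \setminus H$. As $H \in \mathrm{Rem}(G)$ and a foundation vertex lies in no removable subgraph, no such vertex can lie in $V(H)$, so $V(\mathrm{Fnd}(G)) \subseteq V(G) \setminus V(H) = V(G \setminus H)$. This is what makes the target inclusion meaningful in the first place.

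The heart of the argument would be Proposition \ref{union-rem}. I would fix $v \in V(\mathrm{Fnd}(G))$ and let $Q$ be an arbitrary removable subgraph of $G \setminus H$; applying Proposition \ref{union-rem} with $P = H$ turns $Q \in \mathrm{Rem}(G \setminus H)$ into $H \cup Q \in \mathrm{Rem}(G)$. Since $v$ belongs to no removable subgraph of $G$, it avoids $V(H \cup Q) = V(H) \cup V(Q)$, so in particular $v \notin V(Q)$. As $Q$ ranges over all removable subgraphs of $G \setminus H$, this shows $v$ lies in none of them, i.e.\ $v \in V(\mathrm{Fnd}(G \setminus H))$, which completes the vertex inclusion and hence the proposition.

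I do not expect a genuine obstacle here: the statement falls out once one reads Proposition \ref{union-rem} in the direction ``every removable subgraph of $G \setminus H$ extends, by union with $H$, to a removable subgraph of $G$.'' The only points requiring care are (i) confirming that foundation vertices avoid $V(H)$ so that the comparison with $\mathrm{Fnd}(G \setminus H)$ is well-posed, and (ii) using the intersection description of the foundation, so that membership in a \emph{single} removable subgraph already disqualifies a vertex — there is no need to intersect explicitly or to track the isomorphisms in $\mathrm{Iso}_G(H)$.
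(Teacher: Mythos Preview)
Your proof is correct and follows essentially the same approach as the paper: both arguments use Proposition~\ref{union-rem} to lift a removable subgraph $Q$ of $G\setminus H$ to the removable subgraph $H\cup Q$ of $G$, and then observe that a foundation vertex of $G$ cannot lie in $H\cup Q$ and hence not in $Q$. The paper phrases this as a short proof by contradiction, while you give the direct contrapositive and add the explicit observation that $V(\mathrm{Fnd}(G))\subseteq V(G\setminus H)$, but the substance is identical.
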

\begin{proof}
Suppose $v \in V \big( \mathrm{Fnd}(G) \big)$ but $v \notin V \big( \mathrm{Fnd}(G \setminus H) \big)$. Hence, there is $P \in \mathrm{Rem}(G \setminus H )$ such that $v \in V(P)$. Now, by Proposition \ref{union-rem} we have $H \cup P \in \mathrm{Rem}(G)$ and $v \in V(H \cup P)$, which is a contradiction since $v$ is a vertex of the foundation of $G$.
\end{proof}

We end this section with following theorem and its corollaries. Meanwhile, if $P=x_{0}x_{2} \ldots x_{k}$ is a path from $x_{0}$ to $x_{k}$, then $x_{i}Px_{j}$ represents the subpath $x_{i}x_{i+1} \ldots x_{j}$ of $P$.

\begin{theorem}
\label{ray}
Let $G$ be a connected self-contained graph. If $\mathrm{Fnd}(G)=\emptyset$ then $G$ contains a ray.
\end{theorem}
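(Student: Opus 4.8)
The plan is to prove the contrapositive: if the connected self-contained graph $G$ has no ray, then $\mathrm{Fnd}(G) \neq \emptyset$. Since $G$ is self-contained it is infinite, and being rayless it carries a Schmidt rank (the ranking of rayless graphs used via Schmidt's method in \cite{rayless}; see also Section 8 of \cite{Diestel}): there is an ordinal $\alpha = r(G)$ and a finite, necessarily non-empty, set $B \subseteq V(G)$ such that every component of $G - B$ has rank strictly less than $\alpha$. I will use two standard features of this rank freely, namely that it is invariant under isomorphism and that it does not increase when one passes to an induced subgraph.

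The crux is the following separation lemma: every connected induced subgraph $J \subseteq G$ with $r(J) = \alpha$ must satisfy $V(J) \cap B \neq \emptyset$. Indeed, if $V(J) \cap B = \emptyset$ then $J$ is an induced subgraph of $G - B$, and since $J$ is connected it lies inside a single component $C$ of $G - B$, forcing $r(J) \leq r(C) < \alpha$, a contradiction. Now assume, toward a contradiction, that $\mathrm{Fnd}(G) = \emptyset$, and write $B = \{b_{1}, \ldots, b_{k}\}$. Because the foundation is empty, $b_{1}$ lies in some $H_{1} \in \mathrm{Rem}(G)$, and $G_{1} := G \setminus H_{1} \cong G$ is again connected, self-contained, rayless, and of empty foundation (an isomorphism carries foundation onto foundation). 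Inductively, given $H_{m} \in \mathrm{Rem}(G)$ with $G_{m} = G \setminus H_{m} \cong G$, if $B \cap V(G_{m}) \neq \emptyset$ I pick $b \in B \cap V(G_{m})$; since $\mathrm{Fnd}(G_{m}) = \emptyset$, $b$ lies in some $P_{m} \in \mathrm{Rem}(G_{m})$, and Proposition \ref{union-rem} yields $H_{m+1} := H_{m} \cup P_{m} \in \mathrm{Rem}(G)$ with $G_{m+1} = G_{m} \setminus P_{m} \cong G$. As $b \in V(P_{m})$, the finite set $B \cap V(G_{m})$ strictly decreases at each step, so after at most $k$ steps I reach $H \in \mathrm{Rem}(G)$ with $B \subseteq V(H)$. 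But then $G \setminus H \cong G$ is a connected induced subgraph of rank $\alpha$ with $V(G \setminus H) \cap B = \emptyset$, contradicting the separation lemma. Hence $\mathrm{Fnd}(G) \neq \emptyset$.

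The main obstacle is setting up the ranking apparatus cleanly: that the rayless graphs are exactly those admitting a Schmidt rank, that this rank is isomorphism-invariant and monotone under induced subgraphs, and that a finite witnessing set $B$ exists; once these are in hand the separation lemma is immediate and the remainder is bookkeeping. The point requiring genuine care is the chaining of removals — each $P_{m}$ must be chosen inside the current graph $G_{m} = G \setminus H_{m}$ rather than in $G$, so that Proposition \ref{union-rem} applies to glue it onto $H_{m}$, and one must keep invoking $\mathrm{Fnd}(G_{m}) = \mathrm{Fnd}(G) = \emptyset$. Finally, connectedness of $G$ enters essentially: it is precisely what forces the isomorphic copy $G \setminus H$ to sit inside a single component of $G - B$, which is where the hypothesis that $G$ is connected is used.
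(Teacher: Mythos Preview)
Your argument is correct and takes a genuinely different route from the paper. The paper proves the theorem \emph{directly}: starting from any vertex $v_{1}$, it uses the hypothesis $\mathrm{Fnd}(G)=\emptyset$ to find a removable subgraph $H_{1}$ containing $v_{1}$, then exploits connectedness to walk from $v_{1}$ toward its image $f_{1}(v_{1})$ until the path first exits $H_{1}$; that exit vertex becomes $v_{2}$, and the process is iterated with a growing chain of removable subgraphs $H_{1}\subset H_{2}\subset\cdots$, producing an explicit ray as the union of the finite paths. You instead prove the contrapositive via Schmidt's ranking of rayless graphs: a finite witnessing set $B$ of full-rank vertices must meet every connected induced copy of $G$, yet an empty foundation lets you peel off the elements of $B$ one by one with removable subgraphs, eventually exhibiting a copy of $G$ disjoint from $B$. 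The paper's approach is entirely elementary and self-contained, requiring no imported machinery and yielding a concrete ray; your approach is shorter and more structural once the rank apparatus is granted, and it dovetails nicely with the Schmidt-method techniques of \cite{rayless} and \cite{Halin2} already cited in the paper for the rayless setting. One small point worth making explicit in your write-up is why $B\neq\emptyset$: since $G$ is infinite its rank $\alpha$ is positive, and $B=\emptyset$ would force the single component $G$ of $G-B$ to have rank $<\alpha$, a contradiction.
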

\begin{proof}
We inductively construct a ray in $G$. In each step, we add a finite path to a path we have already chosen and make sure that vertex sets of these two paths have only a singleton as their intersection; the last vertex of the first one coincides with the first vertex of the one we want to choose. Let $v_{1} \in V(G)$ and, put $P_{0}$ be the path consisting of the single vertex $v_{1}$. Then since $\mathrm{Fnd}(G)=\emptyset$, there is $H_{1} \in \mathrm{Rem}(G)$ with isomorphism $f_{1} \in \mathrm{Iso}_{G}(H_{1})$ such that $P_{0} \subseteq H_{1}$. Moreover, put $H_{0} = \emptyset$.

Now, suppose we are already decided about $v_{i}$, $H_{i}$, $f_{i}$ and $P_{i-1}$ for $i=1,2, \ldots$. Since $G \setminus H_{i-1}$ is still connected, there is a finite path $P^{*}_{i-1}$ in $G \setminus H_{i-1}$ with endpoints $v_{i}$ and $f_{i}(v_{i})$. Suppose $P^{*}_{i-1}=x_{1}x_{2} \ldots x_{k}$ for some natural $k \geq 2$. So, $x_{1} = v_{i}$ and $x_{k} = f_{i}(v_{i})$. Since $H_i$ and $f_{i}(H_{i})$ are vertex disjoint and $x_{1} \in V(H_{i})$ and $x_{k} \in V(f_{i}(H_{i}))$, it can be inferred that there is $1 < j \leq k$ such that $x_{1}P^{*}_{i}x_{j-1} \subseteq H_{i}$ but $x_{j} \notin V(H_{i})$. So, the edge $x_{j-1}x_{j}$ must be an edge of $\mathrm{link}(H_{i})$.

On the other hand, since $\mathrm{Fnd}(G \setminus H_{i})=\emptyset$, there is $H^{*}_{i} \in \mathrm{Rem}(G \setminus H_{i})$ such that $x_{j} \in V(H^{*}_{i})$. By Proposition \ref{union-rem}, $H_{i+1}= H_{i} \cup H^{*}_{i}$ is a removable subgraph of $G$, and hence, there is an isomorphism $f_{i+1} \in \mathrm{Iso}_{G}(H_{i+1})$. Now, we put $v_{i+1} = x_{j}$ and $P_{i} = P_{i-1} \cup v_{i}P^{*}_{i-1} v_{i+1}$.

It remains to show that $P_{i-1}$ and $v_{i}P^{*}_{i-1} v_{i+1}$ are edge-disjoint. This actually is simple, because if $P_{i-1}=v_{1}y_{2} \ldots y_{t-1}v_{i}$, then $v_{1}P_{i-1}y_{t-1} \subseteq H_{i-1}$ but $v_{i}P^{*}_{i} v_{i+1} \subset G \setminus H_{i-1}$.

Hence, by the above induction, the induced subgraph of $P = \cup_{i \in \mathbb{N} \cup \{ 0 \}} P_{i}$ in $G$ is a ray and we are done with the proof.
\end{proof}

\begin{corollary}
Let $G$ be connected rayless self-contained graph. Then $\mathrm{Fnd}(G) \neq \emptyset$.
\end{corollary}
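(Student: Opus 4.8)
The plan is to obtain this corollary as the contrapositive of Theorem \ref{ray}, so essentially no new work is required beyond recognizing the logical structure. Recall that a graph is \emph{rayless} precisely when it contains no ray, which is exactly the negation of the conclusion of Theorem \ref{ray}. Thus I would argue by contradiction: suppose $G$ is a connected rayless self-contained graph but that $\mathrm{Fnd}(G) = \emptyset$.

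Under this assumption, $G$ satisfies all three hypotheses of Theorem \ref{ray} — it is connected, it is self-contained, and its foundation is the null graph. The theorem then guarantees that $G$ contains a ray. This directly contradicts the assumption that $G$ is rayless. Hence the supposition $\mathrm{Fnd}(G) = \emptyset$ is untenable, and we conclude $\mathrm{Fnd}(G) \neq \emptyset$.

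I do not expect any genuine obstacle here, since the entire substance of the argument is already carried by the inductive ray-construction in the proof of Theorem \ref{ray}. The only point that deserves a moment's care is that the hypotheses transfer cleanly: connectedness and self-containedness of $G$ are given outright, and ``rayless'' is simply the formal negation of ``contains a ray,'' so the contrapositive applies without any side conditions. One could alternatively phrase the proof as a direct implication — asserting that $\mathrm{Fnd}(G)=\emptyset$ would, via Theorem \ref{ray}, force a ray that a rayless graph cannot possess — but the contradiction format makes the dependence on the theorem most transparent.
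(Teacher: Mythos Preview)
Your proof is correct and matches the paper's approach: the corollary is stated without proof in the paper, signaling that it is precisely the contrapositive of Theorem~\ref{ray}, which is exactly what you have written out.
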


\begin{corollary}
Let $G$ be self-contained graph such that $G \setminus \mathrm{Fnd}(G)$ is connected. Then, $G$ contains a ray. 
\end{corollary}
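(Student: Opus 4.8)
The plan is to reduce the statement to Theorem \ref{ray} by passing to the induced subgraph $G' := G \setminus \mathrm{Fnd}(G)$. Write $F = \mathrm{Fnd}(G)$. Since $G'$ is connected by hypothesis, it suffices to prove that $G'$ is itself a self-contained graph with $\mathrm{Fnd}(G') = \emptyset$; then Theorem \ref{ray} yields a ray in $G'$, and as $G'$ is an induced subgraph of $G$, this ray lives in $G$. So the whole proof amounts to understanding how removable subgraphs of $G$ become removable subgraphs of $G'$ once the foundation is deleted.

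The technical heart is the following claim: for every $H \in \mathrm{Rem}(G)$ there is a removable subgraph $K$ of $G'$ with $V(H) \subseteq V(K)$. To prove it I would pick $f \in \mathrm{Iso}_G(H)$, so $f \colon G \to G \setminus H$ is an isomorphism, and restrict $f$ to the induced subgraph $G'$, obtaining an isomorphism onto the induced subgraph $f(G')$ of $G \setminus H$. Because the foundation is an isomorphism invariant (as noted just before Proposition \ref{12}, $f$ maps the foundation of $G$ onto the foundation of $G \setminus H$), the vertex set of $f(G')$ is $V(G \setminus H) \setminus V(\mathrm{Fnd}(G \setminus H))$. Since every removable subgraph is vertex-disjoint from the foundation we have $V(H) \cap F = \emptyset$, and Proposition \ref{12} gives $F \subseteq \mathrm{Fnd}(G \setminus H)$. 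Combining these, $f(G')$ equals the induced subgraph $G' \setminus K$, where $K$ is the induced subgraph of $G'$ on $V(H) \cup \big( V(\mathrm{Fnd}(G \setminus H)) \setminus V(F) \big)$. Thus $G' \cong G' \setminus K$ with $V(H) \subseteq V(K)$, which proves the claim once $K$ is checked to lie in $\mathrm{Rem}(G')$.

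Two applications of the claim then finish the argument. Applying it to any fixed $H \in \mathrm{Rem}(G)$ (which exists because $G$ is self-contained) produces a removable subgraph of $G'$, so $G'$ is self-contained. Applying it, for each vertex $v \in V(G') = V(G) \setminus V(F)$, to a removable subgraph $H_v \in \mathrm{Rem}(G)$ with $v \in V(H_v)$ — such $H_v$ exists precisely because $v \notin \mathrm{Fnd}(G)$ — produces a removable subgraph $K_v$ of $G'$ with $v \in V(H_v) \subseteq V(K_v)$. Hence no vertex of $G'$ lies in $\mathrm{Fnd}(G')$, that is, $\mathrm{Fnd}(G') = \emptyset$, and Theorem \ref{ray} applies to the connected self-contained graph $G'$.

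The step I expect to require the most care is verifying that the subgraph $K$ produced above genuinely belongs to $\mathrm{Rem}(G')$: one must confirm that $K$ is non-empty (immediate, since $V(H) \subseteq V(K)$ and $H \neq \emptyset$) and that it is a \emph{proper} subgraph of $G'$. Properness is exactly where self-containedness is used: a self-contained graph is infinite, so the isomorphism $G' \cong G' \setminus K$ forces $K \neq G'$, as otherwise $G'$ would be isomorphic to the null graph. A secondary point needing explicit justification is the foundation-invariance identity $f(V(\mathrm{Fnd}(G))) = V(\mathrm{Fnd}(G \setminus H))$; although stated in the text, the proof should spell out that an isomorphism carries removable subgraphs to removable subgraphs and therefore preserves the foundation set-theoretically.
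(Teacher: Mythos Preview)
Your proposal is correct and follows the route the paper evidently intends: the corollary is stated without proof immediately after Theorem \ref{ray}, and the natural reading is to apply that theorem to $G' = G \setminus \mathrm{Fnd}(G)$ after checking that $G'$ is itself self-contained with $\mathrm{Fnd}(G') = \emptyset$, which is exactly the verification you supply. As a side remark, the extra piece $V(\mathrm{Fnd}(G \setminus H)) \setminus V(F)$ in your $K$ is precisely $V(\mathrm{Tor}_G(H))$ in the terminology introduced in Section~3, so your removable subgraph of $G'$ is $K = H \cup \mathrm{Tor}_G(H)$.
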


\section{Twins of disconnected graphs and disconnected twins of connected graphs}
In this section, we show that the general version of graph alternative conjecture, i. e., Conjecture \ref{general}, is true whenever the connected version, i. e., Conjecture \ref{connected}, is true. In order to do this, we use the following statement whose proof is straightforward.

\begin{proposition}
\label{twin-constrain}
A graph $G$ has a strong twin if and only if $G$ is a self-contained graph which has a non-empty induced subgraph $P$ such that $P \notin \mathrm{Rem}(G)$ but there is an $H \in \mathrm{Rem}(G)$ such that $P \subsetneq H$.
\end{proposition}

We say $G$ has a \emph{strong twin through $H$} if $H \in \mathrm{Rem}(G)$ and there is a non-empty set, namely $P \subsetneq H$, such that $P \notin \mathrm{Rem}(G)$.

\begin{lemma}
\label{x}
Let $G$ be a connected graph which has a disconnected strong twin. Then $G$ has infinitely many strong twins.
\end{lemma}
\begin{proof}
Let $G_1$ have a disconnected strong twin $G'$. Then there is a removable subgraph $H$ and a non-empty set, namely $P \subset H$, such that $P \notin \mathrm{Rem}(G_{1})$ and $G'=G_{1}\setminus P$. Moreover, let $f:G_{1}\longrightarrow G'$ be an arbitrary embedding. Since $G'$ is a strong twin of a connected graph $G_1$, it must have a connected component $\hat{G}$ which contains $f(G_{1})$ as a subgraph. Let $Q$ be an arbitrary connected component of $G'\setminus \hat{G}$ and suppose that $P'$ is union of $P$ and all connected components of $G'$ other than $\hat{G}$ and $Q$. Then, it can be deduced that $G_2 = G_1 \setminus P'$ is also a disconnected strong twin of $G_1$ which has 2 connected components.

To produce infinitely many strong twins for $G_1$, notice that $G_{i}:= G_{1} \setminus \bigcup_{j=0}^{i-1} f^{j}(P')$, for $i=2,3,\ldots$, is also a strong twin of $G_1$ which has $i$ connected components. Therefore, $G_2, G_{3},\ldots$ are all mutually non-isomorphic strong twins of $G_1$.
\end{proof}

Now we can prove Theorem \ref{main}.

\begin{proof}[Proof of Theorem \ref{main}.] Let Conjecture \ref{connected} be true. Suppose that $G$ is a self-contained graph which has a strong twin $G'=G \setminus P$ through $H \in \mathrm{Rem}(G)$, where $P \subsetneq H$, $P \neq \emptyset$ and let $f\in \mathrm{Iso}_{G}(H)$.

First suppose that $G$ is connected. Then either $G'$ is also connected, for which Conjecture \ref{connected} implies that $G$ has infinitely many strong twins, or, $G'$ is disconnected, for which we already know infinitely many strong twins by Lemma \ref{x}. Therefore, hereinafter, we assume that $G$ is disconnected. Moreover, for our convenience, when $G_i$ is a connected graph, by the notation $\mathcal{M}_{G}(G_i)$ we show the set of connected components of $G$ which are isomorphic to $G_i$. And, $\mathcal{N}_{G}(G_i)$ stands for the cardinal number of $\mathcal{M}_{G}(G_i)$.

Now, we split our argument into two cases: whether $P$ contains a connected component of $G$ or not.

\begin{itemize}
\item[Case 1.] $P$ contains no connected component of $G$. Consider  $$\mathcal{A}=\{ C \text{ } \vert \text{ } C \text{ is a connected component of }G\text{ and } C\setminus P \not\simeq C \}.$$ It is obvious that $\mathcal{A}$ is not empty, because else $P$ must be a removable subgraph of $G$, which is a contradiction. Moreover, there exists a $\hat{G}\in \mathcal{A}$ such that either $\mathcal{N}_{G}(\hat{G})\neq \mathcal{N}_{G \setminus P}(\hat{G})$ or there is at least one connected component of $\hat{G} \setminus P$, namely $Q$, such that $\mathcal{N}_{G}(Q)\neq \mathcal{N}_{G \setminus P}(Q)$ (because if for all $\hat{G}\in \mathcal{A}$ and for all connected component $Q$ of $\hat{G} \setminus P$ we have $\mathcal{N}_{G}(\hat{G})= \mathcal{N}_{G \setminus P}(\hat{G})$ and $\mathcal{N}_{G}(Q)= \mathcal{N}_{G \setminus P}(Q)$, then $P$ must be a removable subgraph of $G$, contrary to our assumption that $G\setminus P$ is a strong twin for $G$). Therefore, we have the following two cases:
\begin{itemize}
\item[Case 1.1.] There is at least one connected component of $\hat{G} \setminus P$, namely $Q$, such that $\mathcal{N}_{G}(Q)\neq \mathcal{N}_{G \setminus P}(Q)$.
\begin{itemize}
\item[Case 1.1.1.] We have $f(\hat{G}) \subseteq \hat{G}$. Then, by defining $P':=\hat{G} \cap P$ we have $\hat{G}$ is a connected self-contained graph which has a strong twin $\hat{G} \setminus P'$. Therefore, either $\hat{G} \setminus P'$ is connected, for which Conjecture \ref{connected} implies that $\hat{G}$ has infinitely many strong twins, or, $\hat{G} \setminus P'$ is disconnected, for which we can construct infinitely many strong twins by the method of Lemma \ref{x}. If we change $\hat{G}$ with each of these twins, we obtain infinitely many strong twins for $G$, unless $G$ has infinitely many connected components isomorphic to all but finitely many of them. If this later case happens, choose two of these twins, namely $\hat{G}_{1}$ and $\hat{G}_{2}$, such that $\aleph_0 \leq \mathcal{N}_{G}(\hat{G}_{1}) \leq \mathcal{N}_{G}(\hat{G}_{2})$. Then for each $k\in\mathbb{N}$, $G_k$, which is the resulting graph from removing all but $k$ connected component isomorphic to $\hat{G}_{1}$ from $G$, is a strong twin for $G$. This is true as the following discussion shows. For each $k$, it is evident that $G_k$ has a copy of $G$ in itself because we can produce $\mathcal{N}_{G}(\hat{G}_{1})-k=\mathcal{N}_{G}(\hat{G}_{1})$ connected components isomorphic to $\hat{G}_{1}$ from $\mathcal{M}_{G}(\hat{G}_{2})$ while the number of isomorphic copies of $\hat{G}_{2}$ remain unchanged (because $\mathcal{N}_{G}(\hat{G}_{2})$ is an infinite number greater or equal to $\mathcal{N}_{G}(\hat{G}_{1})$). And, it is also obvious that for each pair of natural numbers $r\neq s$, we have $G_r \not\simeq G_s$.

\item[Case 1.1.2.] We have $f(\hat{G}) \nsubseteq \hat{G}$. %Then, there is another connected component of $G$, namely $G^\ast_j$, such that $f(\hat{G}) \subseteq G^\ast_j$. 

\begin{itemize}
\item[Case 1.1.2.1.] $\mathcal{N}_{G}(Q) < \infty$. If $\mathcal{N}_{G}(Q) < \mathcal{N}_{G \setminus P}(Q)$, then $H \setminus P$ must have non-empty intersection with some members of $\mathcal{M}_{G \setminus P}(Q)$, because we must have $G\setminus H \simeq G$. For a natural number $s$ such that $1\leq s \leq \mathcal{N}_{G \setminus P}(Q)$, we can choose $s$ of these intersections, namely $H \cap Q_1, H\cap Q_2, \ldots, H\cap Q_s$. Let $$F'= \bigcup_{Q^{*}\in\mathcal{M}_{G \setminus P}(Q)}(H\cap Q^{*}),$$ and $$F=F'\setminus \bigcup_{k=1}^s (H\cap Q_k).$$ Now, let $P'=P\cup F$. It is evident that $P'\subsetneq H$ is still a non-removable subset of $G$. Moreover, we have $\mathcal{N}_{G \setminus P'}(Q)= \mathcal{N}_{G}(Q)+s$. 

Else, if $\mathcal{N}_{G \setminus P}(Q)< \mathcal{N}_{G}(Q)$, then let $t=\mathcal{N}_{G}(Q)-\mathcal{N}_{G \setminus P}(Q)$. Thus, $P$ must have non-empty intersection with some members of $\mathcal{M}_{G}(Q)$, so that removing $P$ changes isomorphism class of those components. Let $F'$ be the set of these intersections, so $\vert F' \vert >t$, because we know that $\hat{G} \setminus P$ has at least one connected component isomorphic with $Q$. Therefore, we must have $1\leq \mathcal{N}_{\hat{G} \setminus P}(Q)=s< t$. Then, let $P'=P \cap \hat{G}$. It is evident that $P'\subsetneq H$ is still a non-removable subset of $G$, and we have $\mathcal{N}_{G \setminus P'}(Q)= \mathcal{N}_{G}(Q)+s$.

Therefore, since $\mathcal{N}_{G \setminus P}(Q)\neq \mathcal{N}_{G}(Q)$, and the fact that one and only one of the above possibilities is true, we have each $G_k = G \setminus \bigcup_{i=0}^{k-1} f^{i}(P')$, for $k=1,2,\ldots$, is a distinct strong twin for $G$. This is because $G_k \setminus \bigcup_{i=0}^{k-1} f^{i}(H)$ is also isomorphic to $G$, and each $G_k$ has exactly $\mathcal{N}_{G}(Q)+ks$ connected components isomorphic to $Q$.

\item[Case 1.1.2.2.] $\mathcal{N}_{G}(Q)$ is an infinite number. Then either $\mathcal{N}_{G \setminus P}(Q)< \mathcal{N}_{G}(Q)$, or, $\mathcal{N}_{G \setminus P}(Q)> \mathcal{N}_{G}(Q)$. First suppose that $\mathcal{N}_{G \setminus P}(Q)< \mathcal{N}_{G}(Q)$. Therefore, if $\alpha$ is the infinite cardinal number of connected components isomorphic with $Q$ after removing $H\setminus P$ from $G\setminus P$, then we must have $\mathcal{N}_{G}(Q)=\mathcal{N}_{G \setminus P}(Q)+\alpha$, because we must have $G\setminus H\simeq G$. This means that $\alpha=\mathcal{N}_{G}(Q)$. Thus, we can choose a countable sequence from $\mathcal{M}_{G}(Q)$, namely, $Q_1,Q_2,\ldots$, and put $$P_k=P\cup\bigcup_{Q^{*}\in \mathcal{M}_{G}(Q)\setminus \{Q_1,\ldots,Q_k\}}Q^*.$$ Moreover, put $$H'=H\cup\bigcup_{Q^{*}\in \mathcal{M}_{G}(Q)}Q^*.$$ We can easily show that $H'$ is a removable subgraph of $G$, and for each $k=1,2,\ldots$, we have $P_k$ is a non-removable subset of $H'$. Moreover each $G_k=G\setminus P_k$ has exactly $k$ connected component isomorphic to $Q$, so they are all mutually non-isomorphic strong twins for $G$.

Then, suppose that $\mathcal{N}_{G \setminus P}(Q)> \mathcal{N}_{G}(Q)$. Therefore, $H\setminus P$ removes an infinite number with cardinality equal to $\mathcal{N}_{G \setminus P}(Q)$ to lessen the number of connected components isomorphic with $Q$ to $\mathcal{N}_{G}(Q)$. Since $\vert \mathcal{M}_{G \setminus P}(Q)\setminus \mathcal{M}_{G}(Q)\vert =\mathcal{N}_{G \setminus P}(Q)$, we can choose a subset $F \subsetneq \mathcal{M}_{G \setminus P}(Q)\setminus \mathcal{M}_{G}(Q)$ such that $\vert F \vert = \mathcal{N}_{G}(Q)$. Now, let $$H'=(H\setminus F) \cup\bigcup_{Q^{*}\in \mathcal{M}_{G}(Q)}Q^{*},$$ and put $$P_k = P\cup\bigcup_{Q^{*}\in \mathcal{M}_{G}(Q)\setminus \{Q_1,\ldots,Q_k\}}Q^*$$ for some definite countable sequence $\{ Q_k \}_{k\in \mathbb{N}}$ of members of $\mathcal{M}_{G}(Q)$. Then $H'$ is a removable subgraph of $G$ and for each $k=1,2,\ldots$, $P_k \subsetneq H'$ is non-removable set such that $G_k=G \setminus P_k$ has exactly $k$ connected component isomorphic to $Q$.
\end{itemize}

\end{itemize}

\item[Case 1.2.] $\mathcal{N}_{G}(\hat{G})\neq \mathcal{N}_{G\setminus P}(\hat{G})$. 

\begin{itemize}
\item[Case 1.2.1.] We have $f(\hat{G}) \subseteq \hat{G}$. Then, by defining $P':=\hat{G} \cap P$ and following just like Case 1.1.1, we can construct infinitely many strong twins for $G$.

\item[Case 1.2.2.] We have $f(\hat{G}) \nsubseteq \hat{G}$. Then, we can split this case into two cases: 
\begin{itemize}
\item[Case 1.2.2.1.] $\mathcal{N}_{G}(\hat{G}) < \infty$. Then, since $P \subsetneq H\in \mathrm{Rem}(G)$, there must be a another connected component of $G$, namely $\hat{G}'$, such that $\hat{G}' \setminus H$ has $s$ connected components isomorphic to $\hat{G}$. Because if there is no such a $G'$, then we cannot have $G \setminus H \simeq G$. The cardinal number $s$ cannot be infinite because else we must have $\mathcal{N}_{G\setminus H}(\hat{G})\geq \aleph_0$, which is a contradiction since $G\setminus H\simeq G$ implies $\mathcal{N}_{G\setminus H}(\hat{G})=\mathcal{N}_{G}(\hat{G})$. Therefore, $s$ is also finite.

Now, by letting $P'=H\cap \hat{G}'$, we have $G_k = G \setminus \bigcup_{i=0}^{k-1} f^{i}(P')$, for $k=1,2,\ldots$, are infinitely many distinct strong twins of $G$. This is because each $G_k$ has $t+ks$ connected components isomorphic to $\hat{G}'$ and $G_k \setminus \bigcup_{i=0}^{k-1} f^{i}(H)$ is also isomorphic to $G$.

\item[Case 1.2.2.2.] $\mathcal{N}_{G}(\hat{G})$ is an infinite number. Then, similar to Case 1.1.2.2, we have either $\mathcal{N}_{G \setminus P}(\hat{G})< \mathcal{N}_{G}(\hat{G})$, or, $\mathcal{N}_{G \setminus P}(\hat{G})> \mathcal{N}_{G}(\hat{G})$. 

If $\mathcal{N}_{G \setminus P}(\hat{G})< \mathcal{N}_{G}(\hat{G})$, then removing $H\setminus P$ must result in producing a set, with cardinality of $\mathcal{N}_{G}(\hat{G})$, consisting of connected components isomorphic with $\hat{G}$. Therefore, if $\alpha$ is the infinite cardinal number of connected components isomorphic with $\hat{G}$ after removing $H\setminus P$ from $G\setminus P$, then we must have $\mathcal{N}_{G}(\hat{G})=\mathcal{N}_{G \setminus P}(\hat{G})+\alpha$. This means that $\alpha=\mathcal{N}_{G}(\hat{G})$. Thus, we can choose a countable sequence from $\mathcal{M}_{G}(\hat{G})$, namely, $\hat{G}_{j_1},\hat{G}_{j_2},\ldots$, and let $$P_k=P\cup\bigcup_{\hat{G}^{*}\in \mathcal{M}_{G}(\hat{G})\setminus \{\hat{G}_{j_1},\ldots,\hat{G}_{j_k} \}}\hat{G}^*.$$ Moreover, put $$H'=H\cup\bigcup_{\hat{G}^{*}\in \mathcal{M}_{G}(\hat{G})}\hat{G}^*.$$ It is easy to see that $G\setminus H'\simeq G\setminus H$, so $H'$ is also a removable subgraph of $G$. Meanwhile, for each $k=1,2,\ldots$ we have $P_k$ is non-removable subset of $H'$. Consequently, each $G_k=G\setminus P_k$ has exactly $k$ connected component isomorphic to $\hat{G}$, so they are all mutually non-isomorphic strong twins for $G$.

Else, if $\mathcal{N}_{G \setminus P}(\hat{G})> \mathcal{N}_{G}(\hat{G})$, then $H\setminus P$ removes an infinite set with the cardinality equal to $\mathcal{N}_{G \setminus P}(\hat{G})$ to lessen the number of connected components isomorphic to $\hat{G}$ to $\mathcal{N}_{G}(\hat{G})$. Since $\vert \mathcal{M}_{G \setminus P}(\hat{G})\setminus \mathcal{M}_{G}(\hat{G})\vert =\mathcal{N}_{G \setminus P}(\hat{G})$, we can choose a subset $F \subsetneq \mathcal{M}_{G \setminus P}(\hat{G})\setminus \mathcal{M}_{G}(\hat{G})$ such that $\vert F \vert = \mathcal{N}_{G}(\hat{G})$. Now, let $$H'=(H\setminus F)\cup\bigcup_{\hat{G}^{*}\in \mathcal{M}_{G}(\hat{G})}\hat{G}^{*},$$ and put $$P_k = P \cup \bigcup_{\hat{G}^{*}\in \mathcal{M}_{G}(\hat{G})\setminus \{\hat{G}_{j_1},\ldots,\hat{G}_{j_k}\}}\hat{G}^*$$ for some definite countable sequence $\{ \hat{G}_{j_k} \}_{k\in \mathbb{N}}$ of members of $\mathcal{M}_{G}(\hat{G})$. Then, $H'$ is a removable subgraph of $G$ and for each $k=1,2,\ldots$, $P_k$ is a non-removable subset of $H'$ such that $G_k=G \setminus P_k$ has exactly $k$ connected component isomorphic to $\hat{G}$.

\end{itemize}

\end{itemize}

\end{itemize}

\item[Case 2.] $P$ contains a connected component $\hat{G}$ of $G$. So, we have the following two possibilities:

\begin{itemize}
\item[Case 2.1.] $\mathcal{N}_{G \setminus P}(\hat{G})= \mathcal{N}_{G}(\hat{G})$. Let 

\begin{multline*} \mathcal{B}=\{ C^\circ  \vert C \textsl{ is a connected component of }G, C\subseteq P\textsl{ and} \\ \mathcal{N}_{G \setminus P}(C)= \mathcal{N}_{G}(C) \},
\end{multline*}
where $C^\circ$ stands for a representative for the isomorphism class of $C$, i. e., $\mathcal{B}$ contains only one component form those connected components of $G$ which are also contained in $P$, they have the same cardinal number in $G$ and $G\setminus P$ and belong to the same isomorphic class. Then, by defining $$P'=\big( P \cup \bigcup_{C \in \mathcal{B}} \mathcal{M}_{G \setminus P}(C)\big) \setminus \bigcup_{C \in \mathcal{B}} \mathcal{M}_{G}(C),$$ and $$ H'=\big( H \cup \bigcup_{C \in \mathcal{B}} \mathcal{M}_{G \setminus P}(C)\big) \setminus \bigcup_{C \in \mathcal{B}} \mathcal{M}_{G}(C),$$ we have $P'\subsetneq H'\in\mathrm{Rem}(G)$ and $G\setminus P'\simeq G \setminus P$. Moreover, $P'$ contains no connected component of $G$, namely $C$, such that $\mathcal{N}_{G \setminus P}(G_j)= \mathcal{N}_{G}(G_j)$.  Therefore, by Case 1 or Case 2.2, $G$ has infinitely many strong twins through $H'$.

\item[Case 2.2.] $\mathcal{N}_{G \setminus P}(\hat{G})\neq\mathcal{N}_{G}(\hat{G})$.
\begin{itemize}

\item[Case 2.2.1.] $\mathcal{N}_{G}(\hat{G})=t < \infty$. Then, by similar wording like Case 1.2.2.1, we can construct infinitely many strong twins for $G$.

\item[Case 2.2.2.] $\mathcal{N}_{G}(\hat{G})$ is an infinite number. Then just like Case 1.2.2.2, we are able to construct infinitely many strong twins for $G$.
\end{itemize}
 
\end{itemize} 
\end{itemize}

\end{proof}

It is worth mentioning that the converse of Theorem \ref{main} is not evident because a counterexample to Conjecture \ref{connected} can still have infinitely many disconnected strong twins, i. e., it might not be a counterexample to Conjecture \ref{general}.

\section{Torsion of a removable subgraph}

In this section, we try to find an answer to the following question: let $P$ and $Q$ be removable subgraphs of a self-contained graph $G$. Under what conditions on $P$ and $Q$, we have $P \cup Q$ is another removable graph of $G$? Before answering to this, we show that there are self-contained graphs that for some removable subgraphs of them like $H$, the foundation does not remain invariant after removal of $H$. This fact uncover another substructure of self-contained graphs which has an impact on the answer to the our question.

In the following proposition we use the notation $\mathrm{Fin}(H)$ for the set $\{ a \in V(H) : \mathrm{deg}_G (a) < \infty \}$ where $H$ is an induced subgraph of $G$.

\begin{proposition}
\label{14} Let $G$ be a self-contained graph, $v \in V( \mathrm{Fnd}(G))$ and $$\Big| \mathrm{Fin}\big( \mathrm{Fnd}(G) \big) \Big| < \infty .$$ If $\mathrm{deg}_{G}(v) < \infty$ then $N_{G}(v) \subseteq V(\mathrm{Fnd}(G))$.
\end{proposition}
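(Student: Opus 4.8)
The plan is to argue by contradiction. Suppose $v \in V(\mathrm{Fnd}(G))$ satisfies $\deg_{G}(v) < \infty$ but that some neighbour $u$ of $v$ lies outside the foundation. Since $u \notin V(\mathrm{Fnd}(G)) = \bigcap_{H \in \mathrm{Rem}(G)} G \setminus H$, there is a removable subgraph $H \in \mathrm{Rem}(G)$ with $u \in V(H)$; fix an isomorphism $f \in \mathrm{Iso}_{G}(H)$, so $f : G \longrightarrow G \setminus H$. Because $v$ is a foundation vertex it belongs to no removable subgraph, so in particular $v \in V(G \setminus H)$, and by Proposition \ref{12} we have $v \in V(\mathrm{Fnd}(G)) \subseteq V(\mathrm{Fnd}(G \setminus H))$.

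Next I would establish that the finite-degree foundation vertices of $G$ and of $G \setminus H$ are literally the same set. Write $F = \mathrm{Fin}(\mathrm{Fnd}(G))$ for the vertices of $\mathrm{Fnd}(G)$ of finite degree in $G$, and let $F'$ denote the vertices of $\mathrm{Fnd}(G \setminus H)$ of finite degree in $G \setminus H$. First, $F \subseteq F'$: if $w \in F$ then $w \in \mathrm{Fnd}(G) \subseteq \mathrm{Fnd}(G \setminus H)$ by Proposition \ref{12}, and since $G \setminus H$ is an induced subgraph of $G$, deleting vertices cannot raise a degree, so $\deg_{G \setminus H}(w) \leq \deg_{G}(w) < \infty$. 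Second, $|F| = |F'|$: as $\mathrm{Fnd}(G) \neq \emptyset$, the isomorphism $f$ carries $\mathrm{Fnd}(G)$ onto $\mathrm{Fnd}(G \setminus H)$ and, being a graph isomorphism, preserves finiteness of degree via $\deg_{G}(w) = \deg_{G \setminus H}(f(w))$; hence $f$ restricts to a bijection $F \longrightarrow F'$. Combining these with the hypothesis $|F| < \infty$, an inclusion of equinumerous finite sets is an equality, so $F = F'$.

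Finally I would derive the contradiction by comparing degree sums over this common set. On one hand, the degree-preserving bijection together with $f(F) = F' = F$ gives $\sum_{w \in F} \deg_{G}(w) = \sum_{w \in F} \deg_{G \setminus H}(w)$. On the other hand, for every $w \in F$ we have $\deg_{G \setminus H}(w) \leq \deg_{G}(w)$, while for $w = v$ the inequality is strict: indeed $v \in F$, since $v \in \mathrm{Fnd}(G)$ and $\deg_{G}(v) < \infty$, and its neighbour $u$ lies in $V(H)$ and is therefore deleted in $G \setminus H$, so $\deg_{G \setminus H}(v) \leq \deg_{G}(v) - 1 < \deg_{G}(v)$. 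Summing these termwise inequalities over the finite set $F$ yields $\sum_{w \in F} \deg_{G \setminus H}(w) < \sum_{w \in F} \deg_{G}(w)$, contradicting the equality just obtained. Hence $v$ can have no neighbour outside the foundation, that is, $N_{G}(v) \subseteq V(\mathrm{Fnd}(G))$.

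I expect the main obstacle to be the set equality $F = F'$: it forces one to juggle simultaneously that the foundation can only grow under removal (Proposition \ref{12}), that vertex deletion is degree-monotone, that the chosen isomorphism genuinely maps $\mathrm{Fnd}(G)$ \emph{onto} $\mathrm{Fnd}(G \setminus H)$ (which needs $\mathrm{Fnd}(G) \neq \emptyset$), and, crucially, the finiteness hypothesis $\big|\mathrm{Fin}(\mathrm{Fnd}(G))\big| < \infty$ — without which an inclusion of infinite equinumerous sets need not be an equality and the degree-sum bookkeeping has no meaning. Once $F = F'$ is secured, the final counting step is routine.
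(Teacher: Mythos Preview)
Your proof is correct, and it is genuinely different from the paper's. Both arguments start the same way: pick a neighbour $u$ of $v$ outside the foundation, a removable $H$ containing $u$, and an isomorphism $f:G\to G\setminus H$; then use that $f$ carries $\mathrm{Fnd}(G)$ onto $\mathrm{Fnd}(G\setminus H)$ together with Proposition~\ref{12}. From there the routes diverge. The paper \emph{iterates} $f$: it argues that $v,f(v),f^{2}(v),\ldots$ are pairwise distinct (since $v=f^{n}(v)$ would force $\deg_{G}(v)=\deg_{f^{n}(G)}(v)\le \deg_{G\setminus H}(v)<\deg_{G}(v)$), and that each $f^{i}(v)$ lies in $f^{n}(\mathrm{Fnd}(G))=\mathrm{Fnd}(f^{n}(G))$ with finite degree there; pulling back along $f^{n}$, $\mathrm{Fnd}(G)$ would have at least $n$ finite-degree vertices for every $n$, contradicting the hypothesis. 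You instead run a \emph{one-step counting} argument: from $F\subseteq F'$, $|F|=|F'|$ via the bijection $f|_{F}$, and $|F|<\infty$, you get $F=F'$; then the degree-preserving permutation $f$ of $F$ forces $\sum_{w\in F}\deg_{G}(w)=\sum_{w\in F}\deg_{G\setminus H}(w)$, while the termwise inequality (strict at $v$) gives the opposite. Your version sidesteps the need to verify distinctness of the iterates and packages the role of the finiteness hypothesis very transparently (it is exactly what lets an inclusion of equinumerous sets collapse to equality and makes the degree sums finite); the paper's version, on the other hand, produces an explicit infinite family of finite-degree foundation vertices, which is a slightly stronger qualitative conclusion in the contrapositive.
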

\begin{proof} If $N_{G}(v) \nsubseteq V(\mathrm{Fnd}(G))$, then $v$ is adjacent to one vertex $b$ which is not a vertex of the foundation. Hence, $b$ is a vertex of a removable subgraph $H$ whose elimination from $G$ reduces the degree of $v$. Now let $f: G \longrightarrow G \setminus H$ be an isomorphism. By Proposition \ref{12}, $f$ induces an injection on $\mathrm{Fnd}(G)$. Hence, $v$, $f(v)$, $f^2 (v)=f(f(v))$, ... are all distinct vertices of  $\mathrm{Fnd}(G) \subseteq f \big( \mathrm{Fnd}(G) \big) \subseteq f^{2}\big( \mathrm{Fnd}(G) \big) \ldots$ respectively. Therefore, for each $n \in \mathbb{N}$, $f^{n} \big( \mathrm{Fnd}(G) \big)$ is isomorphic to $\mathrm{Fnd}(G)$ and contains $n$ distinct finite degree vertices. So, by common induction, $\mathrm{Fnd}(G)$ contains infinitely many finite-degree vertices, contradiction to our assumption.
\end{proof}
\begin{corollary}
Let $G$ be a connected self-contained graph and $\mathrm{Fnd}(G) \neq \emptyset$. Then, if $\mathrm{Fnd}(G)$ is a finite graph, then it contains a vertex $v$ which has an infinite degree in $G$.
\end{corollary}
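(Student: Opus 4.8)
The plan is to argue by contradiction, using Proposition \ref{14} as the main tool. Suppose the conclusion fails, so that $\mathrm{Fnd}(G)$ is finite and yet every vertex of it has finite degree in $G$; the goal is to contradict either the connectedness or the infiniteness of $G$.

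First I would check that the hypotheses of Proposition \ref{14} are met. Since $\mathrm{Fnd}(G)$ is a finite graph, its vertex set is finite, and as $\mathrm{Fin}(\mathrm{Fnd}(G)) \subseteq V(\mathrm{Fnd}(G))$ we get $\big| \mathrm{Fin}(\mathrm{Fnd}(G)) \big| < \infty$ for free. Under the contradiction hypothesis every $v \in V(\mathrm{Fnd}(G))$ satisfies $\mathrm{deg}_{G}(v) < \infty$, so Proposition \ref{14} applies to each such $v$ and yields $N_{G}(v) \subseteq V(\mathrm{Fnd}(G))$.

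The key step is to read this as a closure property: $V(\mathrm{Fnd}(G))$ is closed under taking neighbours in $G$, i.e.\ no edge of $G$ joins a foundation vertex to a vertex outside the foundation. Since $G$ is connected and $\mathrm{Fnd}(G) \neq \emptyset$, I would fix a vertex $v_{0} \in V(\mathrm{Fnd}(G))$ and, for an arbitrary $w \in V(G)$, take a path $v_{0} = u_{0}, u_{1}, \ldots, u_{m} = w$; applying the closure property along the path gives $u_{1} \in V(\mathrm{Fnd}(G))$, then $u_{2} \in V(\mathrm{Fnd}(G))$, and so on, so that $w \in V(\mathrm{Fnd}(G))$. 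Hence $V(\mathrm{Fnd}(G)) = V(G)$, and since $V(\mathrm{Fnd}(G))$ is finite, $G$ itself would be finite, contradicting the fact that a self-contained graph is infinite. Therefore the contradiction hypothesis is untenable and $\mathrm{Fnd}(G)$ must contain a vertex of infinite degree in $G$.

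I expect the only point requiring care to be the propagation of the neighbour-closure property along an entire path; the rest is an immediate combination of Proposition \ref{14} with the finiteness of the foundation. As an alternative to invoking infiniteness at the end, one can note that $\mathrm{Fnd}(G) \subseteq G \setminus H$ for every $H \in \mathrm{Rem}(G)$, so the foundation is always a proper subgraph of $G$ and cannot coincide with $G$, which closes the argument just as well.
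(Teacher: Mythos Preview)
Your argument is correct and is exactly the derivation the paper intends: the corollary is stated without proof because it follows from Proposition~\ref{14} in precisely the way you describe (finite foundation $\Rightarrow$ the hypothesis $\lvert \mathrm{Fin}(\mathrm{Fnd}(G))\rvert<\infty$ holds; if all foundation vertices had finite degree, Proposition~\ref{14} would make $V(\mathrm{Fnd}(G))$ neighbour-closed, forcing it to be a connected component, hence all of $G$ by connectedness, contradicting the infiniteness of a self-contained graph). Your alternative closing observation, that $\mathrm{Fnd}(G)\subseteq G\setminus H$ is always proper, is equally valid.
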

\begin{example}
\label{15} The condition $\Big| \mathrm{Fin}\big( \mathrm{Fnd}(G) \big) \Big| < \infty$ in Proposition \ref{14} is not redundant.  Let $A= \{ a_{1}, a_{2}, \ldots \}$, $B= \{ b_{1}, b_{2}, \ldots \}$ and $C= \{ c_{0}, c_{-1}, c_{-2} \ldots \}$ be three sets equipotent to natural numbers disjoint from $\mathbb{Z}$ and from each other, and suppose that $G$ is a graph with vertex set $V(G) = \mathbb{Z} \cup A \cup B \cup C$ and edge set of the form of union of the following four sets:
\begin{itemize}
\item[i] $\big\{ \{ v_{1}v_{2} \} : v_{1}, v_{2} \in \mathbb{Z}$ and $\vert v_{1} - v_{2} \vert  = 1 \big\}$
\item[ii] $\big\{ \{ a_{j}j \} : j \in \mathbb{N} \big\}$
\item[iii] $\big\{ \{ b_{j}j \} : j \in \mathbb{N} \big\}$
\item[iv] $\big\{ \{ c_{j}j \} : j \in \mathbb{Z}$ and $j\leq 0 \big\}$.
\end{itemize}
Then $G$ is a self-contained graph since $P =\{ a_{1} \}$ and $Q= \{ b_{1} \}$ are two removable subgraph of $G$. Moreover, $\mathrm{Fnd}(G) = \mathbb{Z} \cup C$ and for every $j > 0$, $j$ is a finite-degree vertex of the foundation of $G$ which has a neighbor out of $\mathrm{Fnd}(G)$.
\end{example}

Example \ref{15} reveals another important substructure of self-contained graphs. Let $G$ be a self-contained graph and $H \in \mathrm{Rem}(G)$. A vertex $v$ of $G$ is called a \emph{twisted vertex} for $H$ if there exists $P \in \mathrm{Rem}(G)$ such that $v \in V(P)$ and $v \in \mathrm{Fnd}(G \setminus H)$. The subgraph induced by all twisted vertices for $H$ is called the \emph{torsion} of $H$ and is denoted by $\mathrm{Tor}_{G} (H)$.  Meanwhile, when $\mathrm{Tor}_{G} (H) = \emptyset$ we say $H$ is a \emph{torsion-free removable subgraph} of $G$. Moreover, we say $G$ is \emph{torsion-free self-contained graph} if all removable subgraphs of it are torsion-free.

Torsion of a removable subgraph can be the null graph, as it is in all examples prior to Example \ref{15}. But in Example \ref{15}, if we consider $H=\{ a_{1} \}$ as a removable subgraph of $G$, we have $ \mathrm{Tor}_{G} (H) = \{ b_{1} \}$. By the way, it is obvious from previous definition that for a self-contained graph $G$, if $\mathrm{Fnd}(G) = \emptyset$, then for all removable subgraph $H$ of $G$ we have $\mathrm{Tor}_{G} (H) = \emptyset$, i.e. $G$ is torsion-free.

The following two propositions express some properties of torsions of removable subgraphs:
\begin{proposition}
\label{tensub}
Let $G$ be a self-contained graph, $P,Q \in \mathrm{Rem}(G)$ and $P \subseteq Q$. Then $\mathrm{Tor}_{G} (P)\subseteq \mathrm{Tor}_{G} (Q)$.
\end{proposition}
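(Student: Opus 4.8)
The plan is to collapse the whole statement into a single foundation-containment and then read off the result from the definition of a twisted vertex. First I would unwind that definition: a vertex $v$ belongs to $\mathrm{Tor}_{G}(H)$ precisely when (a) $v \in V(S)$ for some $S \in \mathrm{Rem}(G)$ (equivalently, $v \notin V(\mathrm{Fnd}(G))$), and (b) $v \in V(\mathrm{Fnd}(G \setminus H))$. Condition (a) makes no reference to $H$ whatsoever, so it holds for the choice $P$ if and only if it holds for the choice $Q$. Therefore the entire content of the proposition is condition (b), i.e. it suffices to prove the vertex-set inclusion $V\big(\mathrm{Fnd}(G \setminus P)\big) \subseteq V\big(\mathrm{Fnd}(G \setminus Q)\big)$.

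Next I would exhibit $G \setminus Q$ as arising from $G \setminus P$ by deleting a single removable subgraph. Since $P \subseteq Q$ gives $V(P) \subseteq V(Q)$, and since both $G \setminus P$ and $G \setminus Q$ are induced subgraphs depending only on the deleted vertex sets, I may assume $V(P) \subsetneq V(Q)$; otherwise $G \setminus P = G \setminus Q$ and there is nothing to prove. Put $R = G[V(Q) \setminus V(P)]$, a nonempty induced subgraph of the self-contained graph $G \setminus P$. Because $V(P) \cup V(R) = V(Q)$ and removability is a property of the deleted vertex set, we have $G \setminus (P \cup R) = G[V(G) \setminus V(Q)] = G \setminus Q \cong G$, so $P \cup R$ is a nonempty proper removable subgraph of $G$. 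Applying Proposition \ref{union-rem} to $R$, viewed as an induced subgraph of $G \setminus P$, then forces $R \in \mathrm{Rem}(G \setminus P)$; and a direct vertex-set computation gives $G \setminus Q = (G \setminus P) \setminus R$.

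Now I would invoke Proposition \ref{12} for the self-contained graph $G \setminus P$ together with its removable subgraph $R$, obtaining $\mathrm{Fnd}(G \setminus P) \subseteq \mathrm{Fnd}\big((G \setminus P) \setminus R\big) = \mathrm{Fnd}(G \setminus Q)$, which is exactly the inclusion isolated in the first paragraph. Consequently, if $v$ is twisted for $P$, witnessed by some $S \in \mathrm{Rem}(G)$ with $v \in V(S)$ and $v \in V(\mathrm{Fnd}(G \setminus P))$, then the same $S$ still satisfies $v \in V(S)$, while the inclusion upgrades $v \in V(\mathrm{Fnd}(G \setminus P))$ to $v \in V(\mathrm{Fnd}(G \setminus Q))$; hence $v$ is twisted for $Q$. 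Since each torsion is by definition the subgraph induced on its set of twisted vertices, this vertex inclusion immediately yields $\mathrm{Tor}_{G}(P) \subseteq \mathrm{Tor}_{G}(Q)$.

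I expect the only delicate point to lie in the second paragraph, where one must verify carefully that $R$ is genuinely removable from $G \setminus P$ and that $G \setminus Q = (G \setminus P) \setminus R$. The subtlety is that removability depends only on vertex sets (as $G \setminus H = G[V(G) \setminus V(H)]$), so one must argue at the level of vertex sets and not be misled by the fact that $P \cup R$ need not coincide edge-for-edge with $Q$. Once this bookkeeping is settled, Propositions \ref{union-rem} and \ref{12} carry the argument, and everything else is routine definition-chasing.
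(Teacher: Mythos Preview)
Your proof is correct and follows essentially the same route as the paper's: both identify $Q\setminus P$ (your $R$) as a removable subgraph of $G\setminus P$ via Proposition~\ref{union-rem}, then use Proposition~\ref{12} on the self-contained graph $G\setminus P$ to obtain $\mathrm{Fnd}(G\setminus P)\subseteq\mathrm{Fnd}(G\setminus Q)$, which is the only $H$-dependent part of the twisted-vertex condition. Your presentation is simply more explicit about the vertex-set bookkeeping and about isolating condition (b) as the crux.
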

\begin{proof}
If $P=Q$, there is nothing to prove. Hence, suppose $Q \setminus P \neq \emptyset$. Then $Q \setminus P \in \mathrm{Rem}(G \setminus P)$. Now, if $a \in V \big( \mathrm{Tor}_{G} (P) \big)$, then $a \in V \big( \mathrm{Fnd}(G \setminus P) \big) \subseteq V \Big( \mathrm{Fnd} \big( (G \setminus P) \setminus (Q \setminus P) \big) \Big)$ which means that $a \in \mathrm{Fnd}(G \setminus Q)$. We have $a \notin Q$ since $a \notin P$ and $a \notin Q \setminus P$ because $Q \setminus P \in \mathrm{Rem}(G \setminus P)$ cannot contain a vertex of the foundation of $ G \setminus P$. Therefore, $a$ is a vertex of a removable subgraph of $G$ (since $a \in  V \big( \mathrm{Tor}_{G} (P) \big)$) which is also a vertex of the foundation of $G \setminus Q$, and consequently, $a \in  V \big( \mathrm{Tor}_{G} (Q) \big)$.
\end{proof}

\begin{proposition}
Let $G$ be a self-contained graph, $P,Q \in \mathrm{Rem}(G)$ and $Q \cap \mathrm{Tor}_{G} (P) \neq \emptyset$. Then $P \cap \mathrm{Tor}_{G} (Q) \neq \emptyset$.
\end{proposition}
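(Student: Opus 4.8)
The plan is to first translate the statement into a cleaner assertion about foundations. For any $H \in \mathrm{Rem}(G)$, Proposition \ref{12} together with the definitions shows that, on the level of vertex sets, $V(\mathrm{Fnd}(G \setminus H)) = V(\mathrm{Fnd}(G)) \cup V(\mathrm{Tor}_G(H))$ is a disjoint union: a vertex of $\mathrm{Fnd}(G \setminus H)$ either lies in no removable subgraph of $G$ (so it belongs to $\mathrm{Fnd}(G)$) or it does (so it is twisted for $H$), while the two pieces are disjoint because foundation vertices sit in no removable subgraph. Since no vertex of $P$ or $Q$ can lie in $\mathrm{Fnd}(G)$, intersecting with $V(P)$ or $V(Q)$ deletes the $\mathrm{Fnd}(G)$ part, and the proposition becomes equivalent to the implication
\[ V(Q) \cap V(\mathrm{Fnd}(G \setminus P)) \neq \emptyset \ \Longrightarrow\ V(P) \cap V(\mathrm{Fnd}(G \setminus Q)) \neq \emptyset . \]
I would also record the following reading of foundation membership, obtained from Proposition \ref{union-rem}: a vertex $x$ lies in $\mathrm{Fnd}(G \setminus H)$ if and only if $x \notin V(H)$ and there is no removable subgraph $R$ of $G$ with $H \subseteq R$ and $x \in V(R)$; in words, $x$ cannot be placed into a removable subgraph together with the whole of $H$.

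I would then argue by contraposition. Assume $V(P) \cap V(\mathrm{Fnd}(G \setminus Q)) = \emptyset$, which by the reading above says that every vertex of $V(P) \setminus V(Q)$ lies in some removable subgraph of $G$ containing $Q$. Fix an arbitrary $v \in V(Q) \setminus V(P)$; the goal is to produce a single removable subgraph $R$ of $G$ with $P \subseteq R$ and $v \in V(R)$, for this is exactly $v \notin V(\mathrm{Fnd}(G \setminus P))$ and, $v$ being arbitrary, yields $V(Q) \cap V(\mathrm{Fnd}(G \setminus P)) = \emptyset$. A helpful simplification is that $R$ only has to reach the one vertex $v$, not all of $Q$. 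To build $R$ I would fix isomorphisms $f \in \mathrm{Iso}_G(P)$ and $g \in \mathrm{Iso}_G(Q)$, use $g$ to transport $P$ into $G \setminus Q$ (so that $Q \cup g(P) \in \mathrm{Rem}(G)$ by Proposition \ref{union-rem}), and then attach the per-vertex witnesses supplied by the assumption onto $P$, again through Proposition \ref{union-rem}, aligning them along the images of $f$.

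The hard part will be this last assembly. The assumption only furnishes, for each vertex $w$ of $P$, a possibly different removable subgraph covering $w$ together with $Q$, and a union of removable subgraphs need not be removable, so one cannot simply take their union. In fact the implication fails if one remembers only the abstract data, namely the two foundation-preserving injections $f,g$ together with the vertex sets $V(P)$, $V(Q)$ and $V(\mathrm{Fnd}(G))$: a small example built from two independent injections of a countable set already violates it. Consequently the proof must genuinely use that $f$ and $g$ are graph isomorphisms and that the witnessing removable subgraphs sit coherently inside $G$; I expect the decisive step to be a careful control of the overlaps of these witnesses, perhaps cutting them down to finitely many via Proposition \ref{8} or via an exchange argument in the spirit of Proposition \ref{tensub}, so that Proposition \ref{union-rem} can be applied to merge them into one removable subgraph that still contains all of $P$ while reaching $v$.
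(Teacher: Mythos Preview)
Your proposal is not a proof: you correctly reformulate the statement in terms of foundations and set up a contrapositive scheme, but you then explicitly defer the ``hard part'' and only speculate about how it might go. The assembly step you describe---merging, for each vertex of $P$, a separate removable subgraph containing $Q$ into a single removable subgraph containing all of $P$ and reaching $v$---is precisely the obstacle you yourself flag (unions of removable subgraphs need not be removable), and nothing in your outline overcomes it. Fixing isomorphisms $f\in\mathrm{Iso}_G(P)$ and $g\in\mathrm{Iso}_G(Q)$ and transporting $P$ by $g$ does not help: $Q\cup g(P)$ is indeed removable, but it contains $g(P)$, not $P$, so it does not witness $v\notin\mathrm{Fnd}(G\setminus P)$.

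The paper's argument bypasses this assembly entirely by making Proposition~\ref{tensub} the engine rather than an afterthought. It argues directly: if some $H\in\mathrm{Rem}(G\setminus Q)$ satisfied $P\subseteq H$, then $Q\cup H\in\mathrm{Rem}(G)$ with $P\subseteq Q\cup H$, so by monotonicity $\mathrm{Tor}_G(P)\subseteq\mathrm{Tor}_G(Q\cup H)$; but $\mathrm{Tor}_G(Q\cup H)\subseteq\mathrm{Fnd}\bigl(G\setminus(Q\cup H)\bigr)$ is disjoint from $Q$, contradicting $Q\cap\mathrm{Tor}_G(P)\neq\emptyset$. Hence no removable subgraph of $G\setminus Q$ contains all of $P$, and the paper concludes that some vertex of $P$ lies in $\mathrm{Fnd}(G\setminus Q)$, giving $P\cap\mathrm{Tor}_G(Q)\neq\emptyset$. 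You mention Proposition~\ref{tensub} only in the last line as one possible tool among several; in the paper it is the whole argument. (It is fair to observe that the paper's final inference---from ``no single $H\in\mathrm{Rem}(G\setminus Q)$ contains $P$'' to ``some vertex of $P$ lies in $\mathrm{Fnd}(G\setminus Q)$''---is stated tersely and touches the same union-of-removables issue you raised; but the decisive use of torsion monotonicity is what your plan lacks.)
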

\begin{proof}
First, we prove that there does not exist an $H \in \mathrm{Rem}(G \setminus Q)$ such that $P \subseteq H$. If on the contrary $P \subseteq H \in \mathrm{Rem}(G \setminus Q)$ then $Q \cup H \in \mathrm{Rem}(G)$ and by Proposition \ref{tensub} we have $\mathrm{Tor}_{G} (P)\subseteq \mathrm{Tor}_{G} (Q \cup H)$; which is a contradiction since $Q \cap \mathrm{Tor}_{G} (P) \neq \emptyset$ but no vertex of $\mathrm{Tor}_{G} (Q \cup H)$ is a vertex of $Q$.

Therefore, since no removable subgraph of $G\setminus Q$ contains all vertices of $P$, there exist an $a \in V(P)$ which a vertex of $\mathrm{Fnd}(G \setminus Q)$. So, $P \cap \mathrm{Tor}_{G} (Q) \neq \emptyset$.
\end{proof}

\begin{theorem}
Let $G$ be a self-contained graph, $H \in \mathrm{Rem}(G)$ and $\mathrm{Tor}_{G} (H) \neq \emptyset$. Then $\mathrm{Fnd}(G)$ is a self-contained graph which has a removable subgraph $P$ isomorphic to $\mathrm{Tor}_{G} (H)$.
\end{theorem}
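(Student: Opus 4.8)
The plan is to exploit the isomorphism witnessing $H \in \mathrm{Rem}(G)$ together with a decomposition of $\mathrm{Fnd}(G \setminus H)$ that is already implicit in the definition of torsion. First I would record the vertex-set identity underlying everything: by Proposition \ref{12} we have $V(\mathrm{Fnd}(G)) \subseteq V(\mathrm{Fnd}(G \setminus H))$, and unwinding the definition of a twisted vertex shows that $v \in V(\mathrm{Tor}_{G}(H))$ exactly when $v \in V(\mathrm{Fnd}(G \setminus H))$ but $v \notin V(\mathrm{Fnd}(G))$ — indeed the requirement ``$v \in V(P)$ for some $P \in \mathrm{Rem}(G)$'' is precisely ``$v$ does not lie in the foundation of $G$''. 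Hence $V(\mathrm{Fnd}(G \setminus H)) = V(\mathrm{Fnd}(G)) \sqcup V(\mathrm{Tor}_{G}(H))$, a disjoint union of (vertex sets of) induced subgraphs.

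Next I would fix an isomorphism $f \in \mathrm{Iso}_{G}(H)$, so $f : G \to G \setminus H$. By the remark preceding Proposition \ref{12}, $f$ carries $\mathrm{Fnd}(G)$ onto $\mathrm{Fnd}(G \setminus H)$, and as $f$ is an isomorphism this restriction is an isomorphism of induced subgraphs $\mathrm{Fnd}(G) \to \mathrm{Fnd}(G \setminus H)$. I then set $P := f^{-1}\big(\mathrm{Tor}_{G}(H)\big)$, the induced subgraph of $G$ on $f^{-1}\big(V(\mathrm{Tor}_{G}(H))\big)$. Since $V(\mathrm{Tor}_{G}(H)) \subseteq V(\mathrm{Fnd}(G \setminus H)) = f\big(V(\mathrm{Fnd}(G))\big)$, its preimage sits inside $V(\mathrm{Fnd}(G))$, so $P$ is an induced subgraph of $\mathrm{Fnd}(G)$ and $f$ restricts to an isomorphism $P \cong \mathrm{Tor}_{G}(H)$. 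To finish I would check that $P$ is removable in $\mathrm{Fnd}(G)$: applying $f$ to the complementary vertex set and using injectivity gives $f\big(V(\mathrm{Fnd}(G)) \setminus V(P)\big) = V(\mathrm{Fnd}(G \setminus H)) \setminus V(\mathrm{Tor}_{G}(H)) = V(\mathrm{Fnd}(G))$ by the identity of the first paragraph, so $f$ restricts to an isomorphism $\mathrm{Fnd}(G) \setminus P \to \mathrm{Fnd}(G)$. For legitimacy I note that $P \neq \emptyset$ because $\mathrm{Tor}_{G}(H) \neq \emptyset$, and that $\mathrm{Fnd}(G) \neq \emptyset$ by the observation following the definition of torsion (an empty foundation forces every torsion to be empty); hence $\mathrm{Fnd}(G) \setminus P$ is non-empty and $P$ is proper. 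Thus $P \in \mathrm{Rem}(\mathrm{Fnd}(G))$, which at once shows that $\mathrm{Fnd}(G)$ is self-contained and that it has a removable subgraph isomorphic to $\mathrm{Tor}_{G}(H)$.

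I expect the only delicate points to be bookkeeping: confirming that the cited remark really delivers a \emph{surjection} of foundations (so the preimage computation closes up exactly), and justifying $f\big(X \setminus Y\big) = f(X) \setminus f(Y)$, which is valid here only because $f$ is injective and $Y = V(P) \subseteq X = V(\mathrm{Fnd}(G))$. There is no genuine combinatorial obstacle; the whole content lies in spotting the decomposition $V(\mathrm{Fnd}(G \setminus H)) = V(\mathrm{Fnd}(G)) \sqcup V(\mathrm{Tor}_{G}(H))$ and pulling the torsion back through $f$ into the foundation of $G$.
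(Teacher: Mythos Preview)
Your proof is correct and follows essentially the same route as the paper's: both use the identity $f\big(\mathrm{Fnd}(G)\big)=\mathrm{Fnd}(G\setminus H)=G\big[V(\mathrm{Fnd}(G))\cup V(\mathrm{Tor}_{G}(H))\big]$ for $f\in\mathrm{Iso}_{G}(H)$ and take $P=f^{-1}\big(\mathrm{Tor}_{G}(H)\big)$. Your version is simply more explicit about the decomposition $V(\mathrm{Fnd}(G\setminus H))=V(\mathrm{Fnd}(G))\sqcup V(\mathrm{Tor}_{G}(H))$ and the properness check for $P$, both of which the paper leaves implicit.
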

\begin{proof}
Let $f: G \longrightarrow G \setminus H$ be an isomorphism. Hence, we have $\mathrm{Fnd}(G) \cong f \big( \mathrm{Fnd}(G) \big) = G \big[ V(\mathrm{Fnd}(G)) \cup V(\mathrm{Tor}_{G} (H)) \big]$, which means that $f \big( \mathrm{Fnd}(G) \big)$ is a self-contained graph and $\mathrm{Tor}_{G} (H) \in \mathrm{Rem} \Big( f \big( \mathrm{Fnd}(G) \big) \Big)$. Therefore, $\mathrm{Fnd}(G)$ is isomorphic to a self-contained graph that has a removable subgraph $P=f^{-1} \big( \mathrm{Tor}_{G}(H) \big)$ which is isomorphic to $\mathrm{Tor}_{G}(H)$.
\end{proof}

Even when $G$ is a torsion-free self-contained graph and $P, Q \in \mathrm{Rem}(G)$, we cannot say for sure that $P \cup Q \in \mathrm{Rem}(G)$, because they may have non-empty intersection.

\begin{example}
\label{rhomb-star}
Let $G$ be the infinite graph with the vertex set $\mathbb{N} \cup \{ 0 \}$ and its edges are of the following:
\begin{itemize}
\item $1$ is only adjacent to $0$, but $0$ is adjacent to all natural $n \equiv \pm 1$ (mod 3),
\item If $n$ is a natural number that $n \equiv 0$ (mod 3), then $n$ is adjacent to $n-1$ and $n+1$. 
\end{itemize}
Then $G$ is a torsion-free self-contained graph which has $P = G[ \{2,3,4 \} ]$, $Q = G[ \{ 1, 2,3 \} ]$ and $R = G[ \{ 1, 5,6 \} ]$ as some of its removable subgraphs. While $P \cup R$ is a removable subgraph of $G$, the other two union subgraphs, $P\cup Q$ and $Q \cup R$, are not.
\end{example}

\begin{proposition}
\label{iso-union}
Let $G$ be a self-contained graph and $P, Q \in \mathrm{Rem}(G)$ such that $P \cap Q = \emptyset$. Then $P \cup Q \in \mathrm{Rem}(G)$ if and only if there exists an isomorphism $f: G \longrightarrow G \setminus P$ such that $f^{-1}(Q) \in \mathrm{Rem}(G)$. In particular, if there is an isomorphism $f: G \longrightarrow G \setminus P$ such that $f(Q)=Q$, then $P \cup Q \in \mathrm{Rem}(G)$.
\end{proposition}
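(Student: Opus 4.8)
The plan is to reduce the whole statement to Proposition~\ref{union-rem} together with transport of structure along isomorphisms. Since $P \cap Q = \emptyset$, the vertex set of $Q$ lies entirely in $G \setminus P$, so $Q$ is an induced subgraph of $G \setminus P$ and Proposition~\ref{union-rem} applies: $P \cup Q \in \mathrm{Rem}(G)$ if and only if $Q \in \mathrm{Rem}(G \setminus P)$. Thus it suffices to prove that $Q \in \mathrm{Rem}(G \setminus P)$ holds exactly when some isomorphism $f \colon G \longrightarrow G \setminus P$ satisfies $f^{-1}(Q) \in \mathrm{Rem}(G)$. Because $P \in \mathrm{Rem}(G)$, at least one such $f \in \mathrm{Iso}_G(P)$ exists, so both sides of the desired equivalence are meaningful.

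For the forward direction I would assume $Q \in \mathrm{Rem}(G \setminus P)$, fix any $f \in \mathrm{Iso}_G(P)$, and push the witnessing isomorphism $(G \setminus P) \setminus Q \cong G \setminus P$ back through $f^{-1}$. Since $f^{-1}$ is an isomorphism from $G \setminus P$ onto $G$ carrying $V(Q)$ bijectively to $V(f^{-1}(Q))$, it restricts to an isomorphism $(G \setminus P) \setminus Q \longrightarrow G \setminus f^{-1}(Q)$; composing, $G \setminus f^{-1}(Q) \cong (G \setminus P) \setminus Q \cong G \setminus P \cong G$, so $f^{-1}(Q) \in \mathrm{Rem}(G)$. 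Note that this argument actually shows the existential quantifier is satisfied by \emph{every} choice of $f$, not merely some special one.

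For the converse I would run the same computation in the opposite direction: given $f \colon G \longrightarrow G \setminus P$ with $f^{-1}(Q) \in \mathrm{Rem}(G)$, the relation $G \setminus f^{-1}(Q) \cong G$ is transported by $f$ to $(G \setminus P) \setminus Q \cong G \setminus P$, which yields $Q \in \mathrm{Rem}(G \setminus P)$ and hence, by Proposition~\ref{union-rem}, $P \cup Q \in \mathrm{Rem}(G)$. The ``in particular'' clause then drops out immediately: if $f(Q) = Q$ then, $f$ being bijective on vertices, $f^{-1}(Q) = Q$, which lies in $\mathrm{Rem}(G)$ by hypothesis, so the criterion just established forces $P \cup Q \in \mathrm{Rem}(G)$.

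The only genuinely delicate point—everything else being a chain of isomorphisms—is the bookkeeping that $f$ commutes with the deletion operation, i.e.\ that $f$ maps the induced subgraph $G \setminus f^{-1}(Q)$ onto the induced subgraph $(G \setminus P) \setminus Q$. This rests on $f$ being a vertex bijection that preserves adjacency and non-adjacency in both directions, so that it sends induced subgraphs to induced subgraphs and respects set differences of vertex sets; the hypothesis $P \cap Q = \emptyset$ is precisely what guarantees $V(Q) \subseteq V(G \setminus P)$, so that the expressions $(G \setminus P) \setminus Q$ and $f^{-1}(Q)$ are well defined. I expect this verification to be routine but worth stating carefully, since it is the hinge on which both implications turn.
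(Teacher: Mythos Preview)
Your proposal is correct and follows essentially the same route as the paper: both reduce the biconditional to Proposition~\ref{union-rem} (so that $P\cup Q\in\mathrm{Rem}(G)\Leftrightarrow Q\in\mathrm{Rem}(G\setminus P)$) and then transport the removability of $Q$ in $G\setminus P$ back and forth along an isomorphism $f\in\mathrm{Iso}_G(P)$. Your write-up is more explicit than the paper's terse version---in particular you spell out the isomorphism chain $G\setminus f^{-1}(Q)\cong (G\setminus P)\setminus Q\cong G\setminus P\cong G$ and observe that the forward direction in fact works for \emph{every} $f\in\mathrm{Iso}_G(P)$---but the underlying argument is the same.
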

\begin{proof}
If $f^{-1}(Q) \in \mathrm{Rem}(G)$, then $Q \in \mathrm{Rem}(G \setminus P)$ and thus by Proposition \ref{union-rem} we have $P \cup Q \in \mathrm{Rem}(G)$. Meanwhile, when $P \cup Q \in \mathrm{Rem}(G)$, we must have $Q \in \mathrm{Rem}(G \setminus P)$ and hence there is an isomorphism $f: G \longrightarrow G \setminus P$ such that $f^{-1}(Q) \in \mathrm{Rem}(G)$.
\end{proof}

Now, one may think that for a self-contained graph $G$ and $P, Q \in \mathrm{Rem}(G)$, if $Q \cap \big( P \cup \mathrm{Tor}_{G}(P) \big) = \emptyset$ then $P \cup Q \in \mathrm{Rem}(G)$. As we show in the following example, sometimes it is not true.
\begin{example}
\label{roller-brush}
Let $R$ be a graph consisting of infinitely many disjoint copies of the graph $G$ we have introduced in Example \ref{15}, i.e. $G_1$, $G_2$, ... and a vertex $a$ which is adjacent to all other vertices of double rays of copies of $G$. Then $R$ is a self-contained graph since every $G_i$ is removable subgraph of $R$ for $i=1,2, \ldots$. Moreover, if $P_i$ and $Q_i$ are copies of removable subgraphs of $G_i$ we have introduced in Example \ref{15}, then they are also removable subgraphs of $R$. but we have $\mathrm{Tor}_{G_i} (P_{i}) = Q_i$ while $\mathrm{Tor}_{R} (P_{i}) = \emptyset$. So, although $Q_{i} \cap \big( P_{i} \cup \mathrm{Tor}_{R}(P_{i}) \big) = \emptyset$, we have $P_{i} \cup Q_{i} \notin \mathrm{Rem}(R)$.
\end{example}

Let $G$ be a self contained graph and $P \in \mathrm{Rem}(G)$. A vertex $v$ of $G$ is a \emph{curly vertex to $P$} if there exists a self-contained removable subgraph $Q \in \mathrm{Rem}(G)$ such that $P \in \mathrm{Rem}(Q)$ and $v \in V \big( \mathrm{Tor}_{Q} (P) \big)$. The set of all curly vertices to $P$ is shown by $\mathrm{Curl}_{G} (P)$.

We end this section with the following conjecture:
\begin{conjecture}
\label{Conj2}
Let $G$ be a self-contained graph and $P, Q \in \mathrm{Rem}(G)$. If $Q \cap \big( P \cup \mathrm{Tor}_{G}(P) \cup \mathrm{Curl}_{G} (P) \big) = \emptyset$ then $Q \in \mathrm{Rem}(G \setminus P)$. 
\end{conjecture}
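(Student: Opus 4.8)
The plan is to argue by contraposition. By Proposition \ref{union-rem} the desired conclusion $Q \in \mathrm{Rem}(G \setminus P)$ is equivalent to $P \cup Q \in \mathrm{Rem}(G)$, and since the hypothesis forces $P \cap Q = \emptyset$, Proposition \ref{iso-union} rephrases this once more: it suffices to produce an isomorphism $f : G \longrightarrow G \setminus P$ with $f^{-1}(Q) \in \mathrm{Rem}(G)$. So I would assume, for contradiction, that $Q \cap P = \emptyset$ and $Q \cap \mathrm{Tor}_{G}(P) = \emptyset$ yet $Q \notin \mathrm{Rem}(G \setminus P)$, and aim to exhibit a vertex of $Q$ lying in $\mathrm{Curl}_{G}(P)$.

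First I would record what the two vanishing intersections buy us. Writing $G' = G \setminus P$, Proposition \ref{12} gives $\mathrm{Fnd}(G) \subseteq \mathrm{Fnd}(G')$, and since a vertex lies in some removable subgraph of $G$ exactly when it is not in $\mathrm{Fnd}(G)$, the set $V(\mathrm{Tor}_{G}(P))$ is precisely $V(\mathrm{Fnd}(G')) \setminus V(\mathrm{Fnd}(G))$. Every vertex of $Q$ lies in the removable subgraph $Q$ of $G$, hence is not in $\mathrm{Fnd}(G)$; combined with $Q \cap \mathrm{Tor}_{G}(P) = \emptyset$ this yields $V(Q) \cap V(\mathrm{Fnd}(G')) = \emptyset$. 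Thus each individual vertex of $Q$ sits inside some removable subgraph of $G'$, and the failure $Q \notin \mathrm{Rem}(G')$ must be a \emph{global} obstruction: the pieces covering the vertices of $Q$ cannot be assembled into a single removable subgraph equal to $Q$.

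The core of the proof is to localise this global obstruction inside a self-contained removable subgraph $Q'$ of $G$ that contains $P$, modelled on Example \ref{roller-brush}, where the offending vertex becomes frozen only after restricting to the self-contained component $G_i$. As a first ingredient, note that for $f \in \mathrm{Iso}_{G}(P)$ the forward orbit $P^{*} = \bigcup_{n \geq 0} f^{n}(P)$ is already self-contained with $P \in \mathrm{Rem}(P^{*})$: the images $f^{n}(P)$ are pairwise disjoint and $f$ restricts to an isomorphism $P^{*} \longrightarrow P^{*} \setminus P$. I would then try to build $Q'$ from $P$ together with the $Q$-vertices that the linkage $\mathrm{link}(P)$ attaches to $P$ (and their $f$-translates), arranged into a self-contained removable subgraph of $G$, and show that the global obstruction forces some vertex of $Q$ into $\mathrm{Fnd}(Q' \setminus P)$. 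Such a vertex is twisted for $P$ inside $Q'$, hence a curly vertex to $P$ in $G$, contradicting $Q \cap \mathrm{Curl}_{G}(P) = \emptyset$.

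The step I expect to be the genuine obstacle is exactly this construction of $Q'$: it must be \emph{large enough} to freeze a vertex of $Q$ relative to $P$ and yet \emph{small enough} to remain removable in $G$. The raw orbit $P^{*}$ supplies self-containment and $P \in \mathrm{Rem}(P^{*})$, but it need not satisfy $P^{*} \in \mathrm{Rem}(G)$ — for a ray with $f$ the shift and $P$ the initial vertex, $P^{*}$ exhausts the whole graph and $G \setminus P^{*} = \emptyset$. I would therefore pursue a maximality (Zorn) argument: take a maximal removable subgraph of $G'$ covering as much of $Q$ as possible, and argue that a vertex of $Q$ which no removable subgraph of $G'$ can absorb together with the rest becomes a foundation vertex of a suitable $Q' \setminus P$. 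Making precise the passage from ``$Q \notin \mathrm{Rem}(G')$'' to ``a specific vertex of $Q$ is frozen inside a self-contained $Q' \in \mathrm{Rem}(G)$ with $P \in \mathrm{Rem}(Q')$'', and proving that this $Q'$ really is removable in $G$, is where I expect the decisive difficulty to lie — presumably the reason the statement is posed here only as a conjecture.
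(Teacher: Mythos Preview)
The paper offers no proof of this statement: it is labelled a \emph{conjecture} and left open. Your preliminary reductions are sound --- Proposition~\ref{union-rem} does make $Q \in \mathrm{Rem}(G \setminus P)$ equivalent to $P \cup Q \in \mathrm{Rem}(G)$, and your deduction that $V(Q) \cap V(\mathrm{Fnd}(G \setminus P)) = \emptyset$ from the hypotheses is correct --- and you are right that the whole content of the problem is concentrated in the construction of a self-contained $Q' \in \mathrm{Rem}(G)$ with $P \in \mathrm{Rem}(Q')$ that freezes a vertex of $Q$. You also honestly flag this as the unfinished step.

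Two concrete difficulties with the sketch you give for that step. First, the Zorn argument is not available as stated: an increasing chain of removable subgraphs of $G' = G \setminus P$ need not have a removable union, so ``a maximal removable subgraph of $G'$ covering as much of $Q$ as possible'' may simply fail to exist. Second, even granting some maximal object, the inference from ``$Q \notin \mathrm{Rem}(G')$ yet each vertex of $Q$ lies in some member of $\mathrm{Rem}(G')$'' to ``there is a \emph{single} vertex of $Q$ that no removable subgraph of $G'$ can absorb together with the rest'' is a non sequitur: the obstruction might be genuinely global, with every vertex of $Q$ absorbable in many ways but no assembly producing exactly $Q$. Nothing in the argument explains why the obstruction must be witnessed by one vertex, nor why that vertex must land in $\mathrm{Fnd}(Q' \setminus P)$ for a $Q'$ that is simultaneously self-contained, contains $P$ as a removable subgraph, \emph{and} is removable in $G$. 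Your ray example already shows that the orbit $P^{*}$ does not supply such a $Q'$, and you give no alternative construction. In short, the proposal correctly locates the difficulty but does not surmount it; what you have is a restatement of why Conjecture~\ref{Conj2} is a conjecture rather than a proof of it.
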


\section*{Acknowledgments}

The authors owe a great debt to the referee, who have carefully read an earlier version of this paper and made significant notes for improvement. We would like to express our deep appreciation for the referee's work.

\section*{References}

\end{document}